\theoremstyle{plain}
\newtheorem{theorem}{Theorem}[section]
\newtheorem{lemma}[theorem]{Lemma}
\theoremstyle{definition}
\newtheorem{definition}[theorem]{Definition}
\newtheorem{example}[theorem]{Example}
\newenvironment{conjecture}[1]{%
  \manualtheoreminner
}{\endmanualtheoreminner}
\newcommand{\s}{\sigma}
\newcommand{\q}{\tau}
\renewcommand{\a}{\alpha}
\newcommand{\1}{^{-1}}
\newcommand{\p}{\Phi}
\renewcommand{\b}{\beta}
\renewcommand{\l}{\ell}
\renewcommand{\c}{\cdots}
\newcommand{\jo}{\vee_R}
\newcommand{\w}{\leq_R}
\newcommand{\B}{\mathscr{B}(\Phi^+)}
\newcommand{\Inv}{\mathrm{Inv}}
\newcommand{\inv}{\mathrm{inv}}
\renewcommand{\t}[2]{\left(
        #1\ #2
    \right)}
\subjclass[2020]{Primary 20F55; Secondary 05E16, 06F15}
\title{On a conjecture of Dyer on the join in the weak order of a Coxeter group}
\author{Riccardo Biagioli}
\address{Department of Mathematics, University of Bologna, Bologna, Italy}
\email{riccardo.biagioli2@unibo.it}
\author{Lorenzo Perrone}
\address{Department of Mathematics, University of Bologna, Bologna, Italy}
\email{lorenzo.perrone8@unibo.it}
\begin{document}

\begin{abstract}
In one of his papers on the weak order of Coxeter groups, Dyer formulates several conjectures. Among these, one affirms that the extended weak order forms a lattice, while another offers an algebraic-geometric description of the join of two elements in this poset. The former was recently proven for affine types by Barkley and Speyer. In this paper, we establish the latter for Coxeter groups of types $A$ and $I$. Moreover, we verified the validity of this conjecture for types $H_3$ and $F_4$ through the use of Sage.
\end{abstract}

\maketitle

\section{Introduction}
\emph{Coxeter groups} are abstract groups defined by simple presentations and are of fundamental importance in various areas of mathematics. For instance, dihedral groups and, more generally, the symmetry groups of regular polytopes are classical examples of Coxeter groups. These groups can be realized as reflection groups; in particular, the finite Coxeter groups coincide with the finite Euclidean reflection groups.
Throughout this paper, we assume the reader is familiar with the basic theory of Coxeter groups and we refer to \cite{bjorner2005combinatorics} and \cite{Humphreys} for background material and undefined notation.

Let $(W,S)$ be a Coxeter system, where $S$ is the generating set of the Coxeter group $W$. The group $W$ is defined by relations of the form $(s_is_j)^{m_{ij}}=~e$, for any $s_i,s_j\in S$,
where $m_{ii}=1$, and for $i\neq j$, $m_{ij}=m_{ji}\geq 2$ is either a positive  integer or $\infty$.
This presentation is encoded by the Coxeter graph of $(W, S)$: a labeled graph with vertex set $S$, where an edge labeled $m_{ij}$ connects $s_i$ and $s_j$ whenever $m_{ij} \geq 3$. 
Labels equal to 3 are typically omitted, as they are the most common.

\begin{figure}[h]
    \centering
    \includegraphics[scale=0.9]{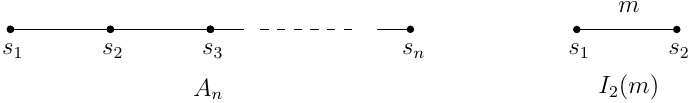} 
    \caption{Coxeter graphs of types $A_n$ and $I_2(m)$.}
\end{figure}

For $w \in W$, the {\em length} of $w$, denoted by $\l(w)$, is the minimum integer $l\in\mathbb{Z}_{\geq0}$ such that $w = s_{i_1}\cdots s_{i_l}$, for some $s_{i_k} \in S$. Any such expression of length $\l(w)$ is called a {\em reduced expression}.

The set of \emph{reflections} of $W$ is given by $T=\{wsw\1\mid w\in W,s\in S\}$ and the elements of $S$ are referred as \emph{simple} reflections.
    
The \emph{Bruhat graph} of $W$, $B(W)$, is the directed graph with vertex set $W$, and where there is an edge from $u$ to $v$, $u\xrightarrow{t} v$, if and only if there is a reflection $t\in T$ such that $v=tu$ and $\l(u)<\l(v)$.
    
One of the most important partial orders on $W$ is the \emph{(right) weak order} which can be defined by the prefix property: $u\w v$ if and only if any reduced expression for $u$ is the prefix of a reduced expression for $v$.
It is well-known that $(W,\w)$ is a meet-semilattice and so a lattice when $W$ is finite. On the contrary, if $W$ is infinite, $(W,\w)$ is never a lattice.

In \cite{dyer_2019}, Dyer introduces a generalization of this poset called extended weak order defined in what follows. Let $\Phi$ be a root system associated with the Coxeter system $(W,S)$, and let $\p^+ \subset \Phi$ denote a set of positive roots. As we will see, each positive root $\a\in\p^+$ corresponds to a reflection $s_\a\in T$.

\begin{figure}[h]
    \centering
    \includegraphics[scale=1.3]{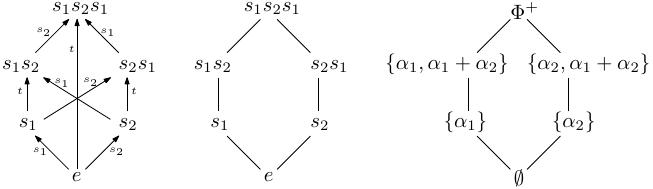}
    \caption{From left to right, the Bruhat graph (where the label $t$ denotes the reflection $s_1s_2s_1$), the Hasse diagrams of $(W,\w)$ and of $(\B,\subseteq)$ for $W$ of type $A_2$.}
    \label{fig:root_biclosed_example}
\end{figure}

\begin{definition}
A subset of positive roots $A\subseteq \p^+$ is \emph{closed} if for any $\a,\b\in A$, 
    \begin{equation*}
        \{a\a+b\b\mid a,b\in\mathbb{R}_{\geq0}\}\cap\p^+\subseteq A,
    \end{equation*}
  is \emph{coclosed} if its complement $A^c$ is closed, and \emph{biclosed} if it is both closed and coclosed.
\end{definition}  
The set of biclosed subsets of $\p^+$, denoted $\B$, ordered by inclusion, forms the so-called \emph{extended weak order} $(\B,\subseteq)$. This poset generalizes the weak order: when $W$ is finite, the two posets are isomorphic, whereas in the infinite case, $(W,\w)$ is isomorphic to a subposet of $(\B,\subseteq)$, for reference see \cite[\S4.1]{dyer_2019}.

In \cite{dyer_2019}, Dyer states two conjectures concerning the extended weak order. The first asserts that this poset is a lattice for every Coxeter system. The second provides a  characterization of the conjectural join of two biclosed sets in $(\B,\subseteq)$.
The former has been recently established for affine types by Barkley and Speyer in \cite{barkley2023affine} thanks to a combinatorial description of biclosed sets they gave in \cite{barkley2022combinatorial}. The latter conjecture, to our knowledge, remains open even for finite Coxeter systems and its original formulation is as follows. 
\smallskip

Let $\tau:\mathscr{P}(\p^+)\rightarrow \mathscr{P}(W)$ be the map from the power set of $\p^+$ to the power set of $W$ sending any $A\subseteq \p^+$ to the set $\q(A)$ defined as
$$\{w \! \in \! W \! \mid \! w \! = \! s_{\a_1}\!\c\! s_{\a_n},\l(s_{\a_1})\!<\!\l(s_{\a_1}s_{\a_2})\!<\!\c\!<\!\l(s_{\a_1}\c s_{\a_n}),\a_1,\ldots,\a_n \! \in \! A \! \}.$$

\begin{conjecture}{D}\cite[\S 2.8]{dyer_2019}, \label{conjecture_1}
    Let $A,B\in\B$; then the join of $A$ and $B$ in $(\B,\subseteq)$ is the following set:
        $$J(A,B)=\{\a\in\p^+\mid s_\a\in\tau(A\cup B)\}.$$
\end{conjecture}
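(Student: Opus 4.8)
The plan is to reduce the statement, in each case, to two facts and then verify them. Since $(\B,\subseteq)$ is already known to be a lattice for every Coxeter system considered here ($A$ and $I_2(m)$ being finite or affine, the affine case by Barkley--Speyer, and $H_3,F_4$ finite), it suffices to prove: \textit{(i)} an \emph{absorption lemma} --- for every biclosed $C\subseteq\p^+$ and every $\a\in\p^+$, if $s_\a\in\tau(C)$ then $\a\in C$; and \textit{(ii)} that $J(A,B)$ is closed. Indeed, $A\cup B\subseteq J(A,B)$ is immediate (for $\a\in A\cup B$ the one-term product $s_\a$ satisfies the vacuous chain condition), so by \textit{(ii)} the closed set $J(A,B)$ contains the smallest closed set $\overline{A\cup B}$ over $A\cup B$; the latter is biclosed in all cases at hand (in type $A$ this is elementary, see below), hence equals $A\jo B$. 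On the other hand $\tau$ is monotone, so $\tau(A\cup B)\subseteq\tau(A\jo B)$, and applying \textit{(i)} to the biclosed set $A\jo B$ gives $J(A,B)\subseteq A\jo B$. Thus $J(A,B)=A\jo B$.

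For $W$ of type $I_2(m)$ both \textit{(i)} and \textit{(ii)} are verified directly. The positive roots carry a natural linear order $\a_1\prec\c\prec\a_m$ (reversed by the only other reflection order; two infinite sequences when $m=\infty$), the biclosed subsets of $\p^+$ are exactly the prefixes and suffixes of this order, and the length-increasing products $s_{\a_1}\c s_{\a_n}$ can be listed by hand inside the (infinite) dihedral group $W$; one then reads off \textit{(i)}, that the smallest closed set over $A\cup B$ is again a prefix or suffix (hence biclosed), and \textit{(ii)}. The finite cases $H_3$ and $F_4$ are dealt with by enumerating all biclosed subsets of $\p^+$ in Sage.

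The essential case is type $A$, i.e.\ $W=S_{n+1}$. Here the biclosed subsets of $\p^+=\{e_i-e_j:1\le i<j\le n+1\}$ are precisely the inversion sets, which we view as sets of pairs that are transitively closed ($(i,j),(j,k)\Rightarrow(i,k)$) and transitively coclosed ($(i,k)\Rightarrow(i,j)$ or $(j,k)$), for $i<j<k$; $(\B,\subseteq)$ is the right weak order on $S_{n+1}$; and the smallest closed set over the union of two inversion sets is their transitive closure, which is automatically coclosed (a short argument using the coclosedness of each summand), hence biclosed and equal to $A\jo B$. Writing $\a_l=e_{a_l}-e_{b_l}$ with $a_l<b_l$ and $w_l=s_{\a_1}\c s_{\a_l}$, the chain condition $\l(w_{l-1})<\l(w_l)$ is equivalent to: the entries of the one-line notation of $w_{l-1}$ in positions $a_l$ and $b_l$ are increasing. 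For the absorption lemma, I take an increasing factorization $(i\,k)=s_{\a_1}\c s_{\a_n}$ with all pairs $\{a_l,b_l\}$ in the biclosed set $C=\Inv(z)$, follow through the $w_l$ the positions occupied by the values $i$ and $k$, and, using that $z$ inverts each pair $(a_l,b_l)$, deduce by induction on $n$ that $z$ inverts $(i,k)$, i.e.\ $(i,k)\in C$. For closedness of $J(A,B)$, given $(i,j),(j,k)\in J(A,B)$ --- witnessed by increasing reflection factorizations of $(i\,j)$ and $(j\,k)$ using pairs from $A\cup B$ --- I must produce such a factorization of $(i\,k)=(i\,j)(j\,k)(i\,j)$. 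The plan is to concatenate a factorization of $(i\,j)$, then one of $(j\,k)$, then one of $(i\,j)$, after re-choosing the pieces (for instance, width-minimal factorizations with transpositions supported on positions $i,\dots,k$) and possibly interleaving them, so that the one-line-notation bookkeeping confirms that no step is length-decreasing.

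The hardest point, I expect, is exactly this last construction: there is no a priori reason for a concatenation of increasing reflection factorizations to be increasing, and even the basic identity $(i\,k)=(i\,j)(j\,k)(i\,j)$ forces a careful choice of the factors. I anticipate resolving it by an induction on $k-i$ (and on the factorization length $n$) that strips off reflections touching the extreme positions, descending to the parabolic subgroup $S_n\subset S_{n+1}$; equivalently, a reduced-word or wiring-diagram argument showing that a length-increasing factorization of $(i\,k)$ can always be chosen with all transpositions supported on $\{i,i+1,\dots,k\}$ and arranged in a ``staircase''. The absorption lemma poses a similar but milder difficulty: the inversion set of an intermediate $w_l$ is in general not contained in the closure of $\{\a_1,\dots,\a_l\}$, so the induction must carry only the information that governs the motion of the two distinguished values.
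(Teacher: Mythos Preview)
Your reduction is valid, but you have made the inclusion $A\vee B\subseteq J(A,B)$ harder than it needs to be, and that is precisely the step you leave open. You propose to prove that $J(A,B)$ is closed and then conclude $\overline{A\cup B}=A\vee B\subseteq J(A,B)$; for this you must splice arbitrary increasing factorizations of $(i\,j)$ and $(j\,k)$ into one of $(i\,k)$, which, as you note, has no reason to remain increasing and which you do not resolve. The paper bypasses this completely. It uses Markowsky's description $T_L(\s\jo\q)=(T_L(\s)\cup T_L(\q))^{tc}$ not as a target for $J(A,B)$ to contain, but as a source of explicit witnesses: any $(a\,b)$ in the transitive closure comes with a chain $a=i_0<i_1<\c<i_l=b$ with each $(i_r\ i_{r+1})\in T_L(\s)\cup T_L(\q)$, and one simply writes down the palindromic path
\[
e\xrightarrow{(i_0\,i_1)}\cdot\xrightarrow{(i_1\,i_2)}\c\xrightarrow{(i_{l-1}\,i_l)}\cdot\xrightarrow{(i_{l-2}\,i_{l-1})}\c\xrightarrow{(i_0\,i_1)}(a\,b)
\]
and checks by a direct inversion count on the two-line notation that the length strictly increases at every step. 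No concatenation of unrelated factorizations is needed; the chain supplied by transitivity \emph{is} the factorization.

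For the reverse inclusion your absorption-lemma sketch (track the positions of the two moved values through the partial products) is close in spirit to the paper's argument, but one ingredient you do not mention is what makes it work: every label of any Bruhat path from $e$ to $(a\,b)$ already lies in $T_{ab}=\{(i\ j)\mid a\le i<j\le b\}$. With this confinement lemma in hand, the paper follows the image of $a$ along the path and shows it is monotone increasing (a drop would contradict the length condition, given that all labels are supported on $[a,b]$); the resulting subsequence of labels is a chain $a=i'_1<i'_2<\c<i'_k<b$ inside $T_L(\s)\cup T_L(\q)$, so $(a\,b)\in(T_L(\s)\cup T_L(\q))^{tc}=T_L(\s\jo\q)$. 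Note that this is actually stronger than your absorption lemma (it does not require the label set to be biclosed), and it is this stronger form that lets the paper avoid proving closedness of $J(A,B)$ as a separate step.
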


In this paper, we prove \cref{conjecture_1} in the case of Coxeter systems of type $A$ and $I$. More precisely, we work with a reformulation of \cref{conjecture_1} specific for Coxeter groups of finite type, which was communicated to us by Hohlweg~\cite{hohlweg2023problems}. It is worth knowing that using the open-source software Sage \cite{sagemath}, we also verified the validity of Dyer's conjecture for the Coxeter groups of type $H_3$ and $F_4$.

An extended abstract of this paper appears in the proceedings of FPSAC 2025 \cite{Biagioli_Perrone_A}.

\begin{figure}[h]
\centering
    \includegraphics[scale=1]{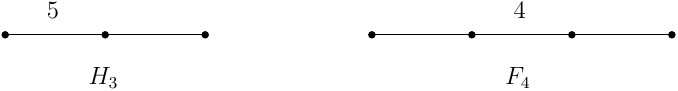}
    \caption{Coxeter graphs of types $H_3$ and $F_4$.}
    \label{fig:}
\end{figure}

\smallskip

\section{Reformulation of Conjecture D}

We begin this section by recalling the concept of \emph{root system} associated with a Coxeter group. Let  $V$ be a real vector space with a basis indexed by the simple reflections of the group, $\Delta=\{\a_s\mid s\in S\}$. Define a symmetric bilinear form $(\ ,\ ):V\times V\rightarrow \mathbb{R}$ by setting
$$(\a_{s_i},\a_{s_j})=-\cos\left(\frac{\pi}{m_{ij}}\right),$$
for all $\a_{s_i},\a_{s_j}\in \Delta$, and extending it by bilinearity to all $V$. The Coxeter group $W$ acts faithfully on $V$ as 
$$s(v) = v-2\frac{(v,\a_s)}{(\a_s,\a_s)}\a_s,$$
for all $s\in S$ and $v \in V$. 
The \emph{root system} associated with $(W,S)$ is defined as the $W$-orbit $\Phi = W(\Delta)=\{w(\a_s)\mid s\in S,w\in W\}$.
The elements of $\Delta$ are called \emph{simple roots} and those of $\Phi^+=\{\sum_{s\in S}c_s\a_s\mid c_s\in\mathbb{R}_{\geq0}\}\cap\Phi$ the \emph{positive roots}. It is well-known that $\Phi=\Phi^+\sqcup\Phi^-$, where $\Phi^-=-\Phi^+$ is the set of \emph{negative roots}.
Now, we introduce an important set of positive roots associated with an element of a Coxeter group.

\begin{definition}
    For any $w\in W$ we define its \emph{inversion set} as: 
    $$\p_w=\p^+\cap w(\p^-)=\{\a\in\p^+\mid \exists\b\in\p^-,\ \a=w(\b)\}.$$
\end{definition}
Inversion sets are natural examples of biclosed subsets of $\p^+$. Indeed, consider $\a,\b\in\p_w$ and suppose there exist $a,b\in\mathbb{R}_{\geq0}$ such that $a\a+b\b\in\p^+$; now, since $w\1(\a),w\1(\b)\in\p^-$, we get 
    $$w\1(a\a+b\b)=aw\1(\a)+bw\1(\b)\in\p^-,$$
    namely, $a\a+b\b\in\p_w$.
    Similarly, if $\a,\b\in\p_w^c$, then $w\1(\a),w\1(\b)\in\p^+$ and, with analogous computations, we get $w\1(a\a+b\b)\in\p^+$, therefore $a\a+b\b\in\p_w^c$.
Actually, Dyer proved that in any Coxeter group the inversion sets are the only finite biclosed sets.  

\begin{lemma}\cite[\S4.1]{dyer_2019} \label[lemma]{lem:biclosed}
    Let $(W,S)$ be any Coxeter system and $A$ a finite subset of $\p^+$. Then $A\in\B$ if and only if there exists $w\in W$ such that $A=\p_w$.
\end{lemma}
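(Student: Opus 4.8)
The implication from right to left is exactly the computation carried out immediately above the statement, namely that inversion sets are biclosed; so the plan is to prove the converse, that every finite $A\in\B$ equals $\p_w$ for some $w\in W$. I would argue by induction on $|A|$, the case $A=\emptyset=\p_e$ being immediate.

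The crux of the induction is the claim that a nonempty finite biclosed set $A$ contains a simple root. To prove it, pick $\a\in A$ of minimal depth, where the depth $\mathrm{dp}(\a)$ is the least value of $\l(w)$ over $w\in W$ with $w(\a)\in\p^-$, and suppose $\a$ is not simple, so $\mathrm{dp}(\a)\ge 2$. By the standard behaviour of the depth function under simple reflections there is a simple reflection $s$ with $s(\a)\in\p^+$ and $\mathrm{dp}(s(\a))<\mathrm{dp}(\a)$, and from $s(v)=v-2\frac{(v,\a_s)}{(\a_s,\a_s)}\a_s$ one obtains $\a=s(\a)+c\,\a_s$ with $c>0$. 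By minimality of $\mathrm{dp}(\a)$, neither $s(\a)$ nor $\a_s$ (which has depth $1$) lies in $A$, hence both lie in the closed set $A^c$; but then the positive root $\a=s(\a)+c\,\a_s$, being a nonnegative combination of two elements of $A^c$, lies in $A^c$ as well, contradicting $\a\in A$. Thus $A$ contains a simple root $\a_s$.

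Now fix such an $\a_s\in A$ and set $B:=s(A\setminus\{\a_s\})=s(A)\cap\p^+$. Since $s$ is linear and restricts to an involution of $\p^+\setminus\{\a_s\}$, we have $B\subseteq\p^+$ and $|B|=|A|-1$; I would then verify that $B\in\B$. Closedness of $B$ follows from that of $A$ by transporting a nonnegative combination through $s$, using that a simple root is never a nonnegative combination of positive roots other than itself (so such combinations stay away from $\a_s$, where $s$ would fail to preserve $\p^+$). For the complement one has $B^c=\{\a_s\}\sqcup s(A^c)$, and checking that $B^c$ is closed is the delicate point: one must treat nonnegative combinations that involve the ray $\a_s$ separately, the key observation being that if an element $\delta\in A^c$ is a nonnegative combination of $\a_s$ and some $s(\rho)$, then $s(\rho)\notin A$, since otherwise closedness of $A$ together with $\a_s\in A$ would force $\delta\in A$. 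Granting $B\in\B$, the inductive hypothesis gives $B=\p_v$ for some $v\in W$; since $\a_s\notin B=\p_v$ we have $v\1(\a_s)\in\p^+$, equivalently $\l(sv)=\l(v)+1$, and then the standard recursion $\p_{sv}=\{\a_s\}\sqcup s(\p_v)$ yields $\p_{sv}=\{\a_s\}\sqcup s(B)=A$, completing the induction.

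The step I expect to be the real obstacle is the verification that $B^c$ is closed: the facts about depth, the recursion for $\p_{sv}$, and the closedness of $B$ itself are all routine once set up, whereas coclosedness of the reflected set genuinely requires the case analysis according to whether $\a_s$ occurs in a given nonnegative combination.
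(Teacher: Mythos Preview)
The paper does not prove this lemma; it is stated with a citation to Dyer and followed immediately by an example, so there is no argument in the paper to compare against. Your outline is the standard one and it is correct.

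The only imprecision is at the step you yourself flag. Your stated key observation---that if some $\delta\in A^c$ is a nonnegative combination of $\alpha_s$ and $s(\rho)$ then $s(\rho)\notin A$---is true but is not quite what is needed to show $B^c$ is closed. What must actually be checked is that a root $\gamma=a\,\alpha_s+b\,s(\rho)\in\Phi^+$ with $\rho\in A^c$ and $b>0$ lies in $B^c$. Suppose not; then $\mu:=s(\gamma)\in A\setminus\{\alpha_s\}$, and applying $s$ to the expression for $\gamma$ gives $\mu=-a\,\alpha_s+b\rho$, i.e.\ $\rho=\tfrac{1}{b}\mu+\tfrac{a}{b}\alpha_s$. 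This exhibits $\rho\in\Phi^+$ as a nonnegative combination of $\mu,\alpha_s\in A$, so closedness of $A$ forces $\rho\in A$, contradicting $\rho\in A^c$. Thus the relevant combination runs in the opposite direction from the one you wrote down, but it still uses closedness of $A$ exactly as you anticipated; with this adjustment your induction goes through without further difficulty.
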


\begin{example}
As we can see in \cref{fig:root_biclosed_example}, in the Coxeter group $W$ of type $A_2$, the set $A=\{\a_1,\a_2\}\subseteq\p^+$ is not an element of $\B$ since $\a_1+\a_2\notin A$, i.e. $A$ is not closed. This also implies that $B=\{\a_1+\a_2\}\subseteq\p^+$ is not coclosed, therefore $B\notin\B$. Indeed, both $A$ and $B$ are not inversion sets of any element of $W$.   
\end{example}

Inversion sets have several important properties. For any $w\in W$, we have $\l(w)=|\p_w|$, moreover, they characterize the weak order: for any $u,v\in W$, 
\begin{equation}\label{eq:char_weak_order}
u\w v \mbox{ if and only if }\p_u\subseteq\p_v.
\end{equation}
As mentioned earlier, when $W$ is finite, the posets $(W,\w)$ and $(\B,\subseteq)$ are isomorphic. More precisely, by \cref{lem:biclosed}, this isomorphism is realized by the map $w\mapsto \p_w$. 
This map remains an injective poset morphism even when $W$ is infinite. An example illustrating this correspondence is shown in \cref{fig:root_biclosed_example}. 
It is worth noting that when $W$ is infinite, the extended weak order continues to admit a maximal element which is $\p^+\in\B$, whereas the weak order does not.
\smallskip

To reformulate the conjecture, we use the classical bijection $\varphi:\p^+\rightarrow T$ (see \cite[\S1.14]{Humphreys}) which assigns to any posivite root $\a\in \p^+$ the reflection 
$$\varphi(\a)=wsw\1=s_\a,$$
where $\a = w(\a_s)$, with $\a_s$ a simple root. 
Under this bijection, the inversion set $\p_w$ is mapped to the \emph{left-reflection set} of $w$ defined by 
$$T_L(w)=\{t\in T\mid \l(tw)<\l(w)\}.$$ 
Therefore, from \eqref{eq:char_weak_order} we have 
\begin{equation}\label{eq:inversion-inclusion}
    u\w v\mbox{ if and only if }T_L(u)\subseteq T_L(v).
\end{equation}
In particular, every element $w\in W$ is uniquely determined by $T_L(w)$ or equivalently by $\p_w$.

We now introduce a concept that plays a fundamental role throughout this paper.

\begin{definition}[$(u,v)$-Bruhat path]
    Let $u,v\in W$. A $(u,v)$-\emph{Bruhat path} is any (directed) path in Bruhat graph $B(W)$ starting from the vertex $e$ and whose edges have labels in the set $T_L(u)\cup T_L(v)$. 
    We denote by $V_W(u,v)$ the set of vertices of all the $(u,v)$-Bruhat paths in $B(W)$.
\end{definition}

We are now ready to state the reformulation of the conjecture. Recall that in a poset $(P,\leq)$, the {\em join} of two elements $x,y\in P$, if it exists, is the unique element $x\vee y\in P$, satisfying the following properties:
\begin{itemize}
    \item $x,y\leq x\vee y$;
    \item for any $z\in P$ such that $x,y\leq z$, then $x\vee y\leq z$.
\end{itemize}

\begin{conjecture}{H}\label{conjecture_2}
    Let $W$ be a finite Coxeter group and $u,v\in W$. Then
        $$T_L(u\jo v)=T\cap V_W(u,v).$$
\end{conjecture}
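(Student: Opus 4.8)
The plan is to prove both inclusions $T_L(u\jo v)\subseteq T\cap V_W(u,v)$ and $T\cap V_W(u,v)\subseteq T_L(u\jo v)$ separately, using \cref{lem:biclosed} and \eqref{eq:inversion-inclusion} to translate everything about the join into statements about inversion/reflection sets. Since $W$ is finite, $u\jo v$ exists and $J:=T_L(u\jo v)$ is the reflection set of an actual group element; the content is to identify this set with $T\cap V_W(u,v)$. First I would record the elementary observation that $V_W(u,v)$ is closed under taking prefixes along Bruhat paths, and that for any vertex $w\in V_W(u,v)$ one has $T_L(w)\subseteq T_L(u)\cup T_L(v)$ whenever $w$ is reached by a Bruhat path all of whose \emph{vertices} lie below $u\jo v$ in weak order — more precisely, I expect the natural invariant to be that every $(u,v)$-Bruhat path stays weakly below $u\jo v$, i.e. $V_W(u,v)\subseteq\{w: w\w u\jo v\}$, which gives the inclusion $T\cap V_W(u,v)\subseteq T_L(u\jo v)$ directly.

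For that ``easy'' inclusion, I would argue by induction on the length of a $(u,v)$-Bruhat path $e=w_0\xrightarrow{t_1}w_1\xrightarrow{t_2}\cdots\xrightarrow{t_k}w_k$. Suppose inductively $w_{i}\w u\jo v$. Since $t_{i+1}\in T_L(u)\cup T_L(v)$ and $w_{i+1}=t_{i+1}w_i$ with $\l(w_{i+1})>\l(w_i)$, I need to see $w_{i+1}\w u\jo v$. This is the crux of the argument: it amounts to showing that if $w\w u\jo v$ and $t\in T_L(u)\cup T_L(v)$ with $\l(tw)>\l(w)$, then $tw\w u\jo v$. Equivalently, in terms of inversion sets, $\p_{tw}=\p_w\sqcup\{w^{-1}(\text{something})\}$ — the new inversion is the positive root $\beta$ with $s_\beta = w^{-1} t w$... actually the new root added to $\p_{tw}$ relative to $\p_w$ is a single positive root, and one must check it lies in $\p_{u\jo v}$. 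Here I would use that $\p_{u\jo v}$ is biclosed and contains $\p_u\cup\p_v$, together with a careful analysis (this is where I expect type $A$ / type $I$ specifics, or a general closure argument, to enter) showing the new root is forced into $\p_{u\jo v}$ by closure from roots already present. This monotonicity-style lemma is the \textbf{main obstacle}: it is essentially asking that the set of Bruhat paths cannot ``escape'' the interval below the join, and in a general Coxeter group the interaction between Bruhat covers and biclosedness is delicate — this is presumably why the conjecture is only proved in types $A$ and $I$.

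For the reverse inclusion $T_L(u\jo v)\subseteq T\cap V_W(u,v)$, the strategy is to show that $T\cap V_W(u,v)$ is itself the left-reflection set of some element $z\in W$ with $u,v\w z$; then by the defining minimality of the join, $u\jo v\w z$, hence $T_L(u\jo v)\subseteq T_L(z)=T\cap V_W(u,v)$. To realize $T\cap V_W(u,v)$ as $T_L(z)$, I would (via \cref{lem:biclosed}) show the corresponding root set $A:=\{\a\in\p^+: s_\a\in T\cap V_W(u,v)\}$ is biclosed, hence equals $\p_z$ for a unique $z$; that $u\w z$ and $v\w z$ follows because every reduced word of $u$ (resp. $v$) traces a $(u,v)$-Bruhat path, so all reflections in $T_L(u)$ and $T_L(v)$, and indeed all of $u,v$ themselves, appear as vertices, giving $\p_u,\p_v\subseteq A$. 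Proving $A$ is closed and coclosed is the second substantial step: closedness should follow from the observation that whenever two roots $\a,\b$ with reflections appearing as Bruhat-path vertices combine to a positive root $\gamma$, one can extend a path to pick up $s_\gamma$; coclosedness is the subtler half and is likely where the bulk of the type-specific casework lives. I would then conclude by combining the two inclusions, noting that uniqueness of elements via their reflection sets (the last sentence before the conjecture) makes the identification $T_L(u\jo v)=A=T\cap V_W(u,v)$ unambiguous.
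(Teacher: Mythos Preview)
Your proposed invariant for the inclusion $T\cap V_W(u,v)\subseteq T_L(u\jo v)$ is not merely hard to prove: it is false, already in type $A$. Take the paper's running example $\s=3124$, $\q=1423$ in $S_4$, with $\s\jo\q=4312$. The one-step path $e\xrightarrow{(1\ 3)}3214$ is a $(\s,\q)$-Bruhat path, but $T_L(3214)=\{(1\ 2),(1\ 3),(2\ 3)\}$ contains $(1\ 2)\notin T_L(4312)$, so $3214\not\w 4312$. Thus $V_W(\s,\q)\not\subseteq\{w:w\w\s\jo\q\}$. The reflection $(1\ 3)=3214$ does lie in $T_L(\s\jo\q)$, as the conjecture predicts, but not via your mechanism.

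A second error underlies this: you assert that passing from $w$ to $tw$ along a Bruhat edge adds a \emph{single} positive root to $\p_w$. That holds only when $t$ is simple; for an arbitrary reflection the length can jump by any odd amount (in the example above $\l(3214)-\l(e)=3$), and $\p_{tw}\setminus\p_w$ generally contains several roots. So the step ``the one new root is forced into $\p_{u\jo v}$ by biclosedness'' is not well-posed.

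The paper does not attempt a uniform argument and does not prove the conjecture in general; it treats types $A$ and $I$ separately. In type $A$ it exploits Markowsky's formula $T_L(\s\jo\q)=(T_L(\s)\cup T_L(\q))^{tc}$: for $T_L(\s\jo\q)\subseteq T\cap V_{S_n}(\s,\q)$ it exhibits, for each $(a\ b)$ in the transitive closure, an explicit palindromic word in the generators of $T_L(\s)\cup T_L(\q)$ and checks by inversion counting that it is a Bruhat path; for the reverse inclusion it first shows (\cref{lem:T_ab}) that any Bruhat path ending at $(a\ b)$ uses only labels $(i\ j)$ with $a\le i<j\le b$, and then extracts from such a path a chain $a=i'_1<\cdots<i'_{k+1}=b$ with each $(i'_r\ i'_{r+1})\in T_L(\s)\cup T_L(\q)$. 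In type $I_2(m)$ the proof is a direct case analysis on reduced words. Your alternative strategy for the other inclusion --- show that $\{\a:s_\a\in T\cap V_W(u,v)\}$ is biclosed --- is reasonable in spirit, but proving coclosedness uniformly is essentially the full conjecture and is not how the paper proceeds.
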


\cref{conjecture_2} states that the left-reflection set of the join $u\jo v$ is precisely the set of reflections reached by all possible $(u,v)$-Bruhat paths. 

The next result states the equivalence between \cref{conjecture_1} and \cref{conjecture_2}. Before presenting a proof, we illustrate the relevant notions introduced above through the following example. As usual, from now on, considering $S=\{s_1,\ldots,s_n\}$, we adopt the convention $\a_i=\a_{s_i}$.

\begin{example}
    Let $\p^+=\{\a_1,\a_2,\a_3,\a_1+\a_2,\a_2+\a_3,\a_1+\a_2+\a_3\}$ be the set of positive roots of the Coxeter group of type $A_3$. Let us consider the following two biclosed sets $A=\{\a_1,\a_1+\a_2\},\ B=\{\a_1,\a_3\}\in\B$. It is easy to see that the join of $A$ and $B$ in the poset $(\B,\subseteq)$ is the set 
    $A\vee B= \{\a_1,\a_3,\a_1+\a_2,\a_1+\a_2+\a_3\}$. We now verify that this set coincides with $J(A,B)$, as defined in the statement of \cref{conjecture_1}. We notice that $A\cup B=\{\a_1,\a_3,\a_1+\a_2\}$, so $s_{\a_1},s_{\a_3},s_{\a_1+\a_2}\in \q(A\cup B)$, which gives $$A\cup B=\{\a_1,\a_3,\a_1+\a_2\}\subseteq J(A,B).$$
    Note that, $s_{\a_1}=s_1$, $s_{\a_3}=s_3$, so we get $s_{\a_1+\a_2}=s_1s_2s_1$, as $\a_1+\a_2=s_1(\a_2)$. 
    Moreover, by the same reasoning, we can write $$s_{\a_1+\a_2+\a_3}=s_{\a_3}s_{\a_1+\a_2}s_{\a_3}=s_3(s_1s_2s_1)s_3;$$
    where $\l(s_{\a_3})<\l(s_{\a_3}s_{\a_1+\a_2})<\l(s_{\a_3}s_{\a_1+\a_2}s_{\a_3})$, thus we get $\a_1+\a_2+\a_3\in J(A,B)$. 
    Finally, one can check that $s_{\a_2}$ and $s_{\a_2+\a_3}$ cannot be written as an increasing-length product of $s_{\a_1},s_{\a_3},s_{\a_1+\a_2}$ and obtain that $J(A,B)=A\vee B$, as stated in \cref{conjecture_1}.

    Now, we reinterpret the same example in the setting of \cref{conjecture_2}. Consider $u=s_1s_2$ and $v=s_1s_3$ in $V$ whose inversion sets correspond to the biclosed sets $A$ and $B$, respectively. That is, $\p^+_u= A$ and $\p^+_v=B$. 
By definition, all the $(u,v)$-Bruhat paths have labels in $T_L(u)\cup T_L(v)=\{s_1,s_3,s_1s_2s_1\}$ and are depicted in \cref{fig:formulation_H}. The set of reflections reached by these paths is $\{s_1,s_3,s_1s_2s_1,s_1s_2s_3s_2s_1\}$ which is the left-reflection set of the element $s_1s_2s_3s_2$, the join $u\jo v$ in $(W,\w)$, as predicted by \cref{conjecture_2}. As a final remark, observe that $\p_{u\jo v}$ is exactly the set $A\vee B$ computed previously, confirming the equivalence of the two conjectures in this example.

    \begin{figure}[h]
        \centering
        \includegraphics[scale=1.3]{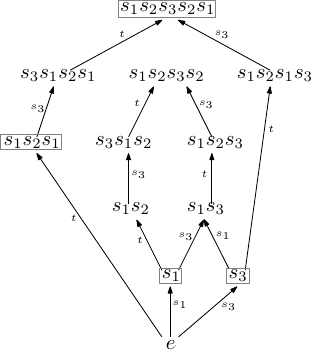}
        \caption{All the possible $(s_1s_2,s_1s_3)$-Bruhat paths in a Coxeter group of type $A$, where the label $t$ denotes the reflection $s_1s_2s_1$. We highlighted the elements of $T\cap V_W(s_1s_2,s_1s_3)$.}
        \label{fig:formulation_H}
    \end{figure}
\end{example}

We are now ready to show that the two formulations of the conjectures are equivalent in the finite case.

\begin{theorem}
    Let $W$ be a finite Coxeter group. Then \cref{conjecture_1} and \cref{conjecture_2} are equivalent.
\end{theorem}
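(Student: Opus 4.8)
The plan is to use finiteness of $W$ to reduce both conjectures to one and the same identity, which can then be checked directly. Since $W$ is finite, \cref{lem:biclosed} gives $\B=\{\p_w:w\in W\}$, so the isomorphism $w\mapsto\p_w$ identifies $(\B,\subseteq)$ with the lattice $(W,\w)$; in particular every join exists and $\p_u\vee\p_v=\p_{u\jo v}$. Thus, in the finite case, \cref{conjecture_1} is equivalent to the statement that $\p_{u\jo v}=J(\p_u,\p_v)$ for all $u,v\in W$. Now apply the bijection $\varphi:\p^+\to T$, recalling that it sends $\p_w$ to $T_L(w)$ and $\gamma$ to $s_\gamma$; since the image under $\varphi$ of $\{\gamma\in\p^+:s_\gamma\in\tau(\p_u\cup\p_v)\}$ is $T\cap\tau(\p_u\cup\p_v)$, \cref{conjecture_1} becomes
$$T_L(u\jo v)=T\cap\tau(\p_u\cup\p_v)\qquad\text{for all }u,v\in W,$$
whereas \cref{conjecture_2} is precisely $T_L(u\jo v)=T\cap V_W(u,v)$ for all $u,v$. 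Hence the theorem reduces to the unconditional identity, which I will denote $(\star)$:
$$T\cap\tau(\p_u\cup\p_v)=T\cap V_W(u,v)\qquad\text{for all }u,v\in W.$$

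To prove $(\star)$ I would use only that reflections are involutions with $\ell(r)=\ell(r\1)$ and that, by definition of $\varphi$, one has $\varphi\1\big(T_L(u)\cup T_L(v)\big)=\p_u\cup\p_v$. Let $t\in T\cap V_W(u,v)$, witnessed by a $(u,v)$-Bruhat path $e=x_0\xrightarrow{r_1}x_1\xrightarrow{r_2}\cdots\xrightarrow{r_k}x_k=t$ with each $r_i=s_{\beta_i}$, $\beta_i\in\p_u\cup\p_v$; reading the edge relations $x_j=r_jx_{j-1}$ gives $x_j=r_j\cdots r_1$ and $\ell(r_1)<\ell(r_2r_1)<\cdots<\ell(r_k\cdots r_1)$. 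Taking inverses yields $t=t\1=r_1r_2\cdots r_k=s_{\beta_1}s_{\beta_2}\cdots s_{\beta_k}$, whose prefixes have exactly the same strictly increasing lengths, so this is a witness that $t\in\tau(\p_u\cup\p_v)$. For the reverse inclusion, start from a witness $t=s_{\beta_1}\cdots s_{\beta_n}$ of $t\in\tau(\p_u\cup\p_v)$ (with $\beta_i\in\p_u\cup\p_v$ and prefixes $q_j=s_{\beta_1}\cdots s_{\beta_j}$ of strictly increasing length); the inverses $q_j\1=s_{\beta_j}\cdots s_{\beta_1}$ satisfy $q_j\1=s_{\beta_j}\,q_{j-1}\1$ with $\ell(q_{j-1}\1)<\ell(q_j\1)$, so $e=q_0\1,q_1\1,\dots,q_n\1$ is a $(u,v)$-Bruhat path ending at $q_n\1=t\1=t$, giving $t\in V_W(u,v)$.

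Putting these together, \cref{conjecture_1} and \cref{conjecture_2} become word-for-word the same family of equalities, which proves the theorem. I expect the only real difficulty to be a matter of care rather than of depth: one must make sure the finite-case identifications ($\B=\{\p_w:w\in W\}$, existence of all joins, transport of $\vee$ along the poset isomorphism) and the translation through $\varphi$ are airtight, and one must keep the left/right multiplication conventions in the definitions of $B(W)$, of $\tau$, and of a $(u,v)$-Bruhat path aligned when running the inversion argument in $(\star)$. It is worth noting that the restriction to reflections in $(\star)$ is essential: for general group elements $\tau(\p_u\cup\p_v)$ and $V_W(u,v)$ need not agree, since one set is produced by right multiplication and the other by left multiplication.
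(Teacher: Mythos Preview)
Your proposal is correct and follows essentially the same route as the paper: reduce to inversion sets via \cref{lem:biclosed}, transport the join along the poset isomorphism $w\mapsto\p_w$, apply $\varphi$ to pass to $T_L$, and then match $T\cap\tau(\p_u\cup\p_v)$ with $T\cap V_W(u,v)$ by inverting the word $s_{\beta_1}\cdots s_{\beta_n}$ and using $\ell(w)=\ell(w\1)$. The only organizational difference is that you isolate the unconditional identity $(\star)$ and prove both inclusions explicitly, whereas the paper compresses this into a single paragraph; your closing remark that $(\star)$ genuinely requires restricting to reflections is a nice observation not present in the paper.
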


\begin{proof}
    Since $W$ is a finite Coxeter group, then by \cref{lem:biclosed} we know that any element of $\B$ is of the form $\p_w$ for some $w\in W$. So, if we consider $\p_u,\p_v\in\B$; then \cref{conjecture_1} says that 
    \begin{equation}\label{eq:formulation_1}
        \p_u\vee\p_v=\{\a\in\p^+\mid s_\a\in\tau(\p_u\cup\p_v)\}=\{\a_t\in\p^+\mid t\in\tau(\p_u\cup\p_v)\}.
    \end{equation}
    The posets $(W,\w\nobreak)$ and $(\B,\subseteq)$ are isomorphic, so $\p_u\vee\p_v=\p_{u\jo v}$ and this set corresponds to the left-reflection set $T_L(\s\jo\q)$ through the bijection $\varphi$.
    
    We now unpack equation~\eqref{eq:formulation_1} using the definition of the function $\tau$. A reflection $t$ is in $\tau(\p_u\cup\p_v)$ if there exist $\a_1,\ldots,\a_n \in \p_u\cup\p_v$ such that $t=s_{\a_1}\c s_{\a_n}$ and 
    \begin{equation}\label{eq:formulation_2}
        \l(s_{\a_1})<\l(s_{\a_1}s_{\a_2})<\c<\l(s_{\a_1}\c s_{\a_n}).
    \end{equation}
    Hence, it is sufficient to show that this condition on a reflection $t\in T$ is equivalent to $t\in V_W(u,v)$.
    Since $t$ is a reflection, $t=t\1=s_{\a_n}s_{\a_{n-1}}\c s_{\a_1}$
    and equation~\eqref{eq:formulation_2} implies $\l(s_{\a_1})<\l(s_{\a_2}s_{\a_1})<\c<\l(s_{\a_n}\c s_{\a_1}).$
    Finally, recalling the correspondence between $\p_w$ and $T_L(w)$, we get $$\a_1,\ldots,\a_n \in \p_u\cup\p_v\iff s_{\a_1},\ldots, s_{\a_n}\in T_L(u)\cup T_L(v),$$ 
    therefore, $t\in V_W(u,v)$ as we have the path
    $$e\xrightarrow{s_{\a_1}}s_{\a_1}\xrightarrow{s_{\a_2}}s_{\a_2}s_{\a_1}\xrightarrow{s_{\a_3}}\c\c\xrightarrow{s_{\a_n}}s_{\a_n}\c s_{\a_1}=t.$$
    So, \cref{conjecture_1} and \cref{conjecture_2} are equivalent whenever $W$ is a finite Coxeter group.
\end{proof}

\smallskip
\section{Dihedral group}

In this section, we prove \cref{conjecture_2} in the case of the finite dihedral group, $I_2(m)$. This group provides the simplest nontrivial example for the conjecture, as it is generated by only two simple reflections $S=\{s,r\}$ with defining relations $(sr)^m=e=s^2=r^2$. 

We begin by analyzing two easy instances in $I_2(4)$, the symmetry group of the square. Here we have $s\jo r=srsr$ and $T_L(s)\cup T_L(r)=\{s,r\}$; therefore any reflection contained in $T=\{s,r,srs,rsr\}$ is a vertex of a $(s,r)$-Bruhat path. Indeed, we have the following two paths:
$$e\xrightarrow{s}s\xrightarrow{r}rs\xrightarrow{s}srs,\qquad e\xrightarrow{r}r\xrightarrow{s}sr\xrightarrow{r}rsr.$$
Hence, $T\cap V_{I_2(4)}(s,r)=T=T_L(srsr)$.

Now, consider $s\jo srs=srs$ and observe that $T_L(s)\subseteq T_L(srs)=\{s,srs,rsr\}$, so any $(s,srs)$-Bruhat path has labels in $T_L(srs)$. This implies
    $$\{s,srs,rsr\}\subseteq T\cap V_{I_2(4)}(s,srs).$$
    Furthermore, since a simple reflection is in a $(u,v)$-Bruhat path if and only if it is an element of $T_L(u)\cup T_L(v)$, we get $r\notin V_{I_2(4)}(s,srs)$; thus, we obtain $T_L(s\jo srs)=T\cap V_{I_2(4)}(s,srs)$.

\begin{theorem}
    Let $u,v\in I_2(m)$; then
    $$T_L(u\jo v)=T\cap V_{I_2(m)}(u,v).$$
\end{theorem}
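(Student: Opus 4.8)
The plan is to put coordinates on $I_2(m)$ and reduce everything to arithmetic modulo $m$. Writing $\rho:=sr$ and $r_j:=\rho^{j}s=(sr)^{j}s$ for $j\in\mathbb{Z}/m\mathbb{Z}$, the identity $s\rho^{b}s=\rho^{-b}$ yields $r_ar_b=\rho^{a-b}$, hence the product rule $r_ar_br_c=r_{a-b+c}$; moreover $r_0,\dots,r_{m-1}$ are exactly the $m$ reflections, with $r_0=s$ and $r_{m-1}=r$. I would record the length formulas $\ell(\rho^{d})=2\min\bigl(d\bmod m,\,(-d)\bmod m\bigr)$ and $\ell(r_{a})=2\min(a,m-1-a)+1$, and the description of left-reflection sets as circular arcs based at $r_0$: $T_L(\underbrace{srs\cdots}_{k})=\{r_0,\dots,r_{k-1}\}$ and $T_L(\underbrace{rsr\cdots}_{k})=\{r_{m-1},\dots,r_{m-k}\}$, which is immediate from the usual description of the inversions of a reduced word, since the $i$-th reflection in the standard enumeration of $T_L(\underbrace{srs\cdots}_{k})$ is the palindrome $\underbrace{srs\cdots s}_{2i-1}=r_{i-1}$. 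Finally, along any Bruhat path $e=x_0\to x_1\to\cdots$ the lengths strictly increase (so $\ell(x_l)\ge l$), and if every edge-label is of the form $r_{j_l}$ then the partial products satisfy $x_l=r_{A_l}$ for $l$ odd and $x_l=\rho^{A_l}$ for $l$ even, where $A_0=0$ and $A_l\equiv j_l-A_{l-1}\pmod m$.

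Since $w\mapsto T_L(w)$ is injective and order-preserving, $T_L(u)\cup T_L(v)$ is one of: $\emptyset$; a proper arc $\{r_0,\dots,r_{k-1}\}$ (up to the diagram symmetry $s\leftrightarrow r$, which acts by $r_j\mapsto r_{m-1-j}$); a union of two opposite nonempty arcs; or all of $T$. If $u,v$ are $\w$-comparable, then $u\jo v$ is the larger one and $T_L(u)\cup T_L(v)=T_L(u\jo v)$; otherwise $u,v$ lie on the two different maximal chains of $(I_2(m),\w)$, so $\{r_0,r_{m-1}\}\subseteq T_L(u)\cup T_L(v)$, and since $T$ is the only set of the form $T_L(w)$ containing both $r_0$ and $r_{m-1}$, we get $u\jo v=w_0$ and $T_L(w_0)=T$. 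This splits the theorem into two pieces. In the comparable case the inclusion $T_L(u\jo v)\subseteq T\cap V_{I_2(m)}(u,v)$ is witnessed by the length-one paths $e\xrightarrow{t}t$, $t\in T_L(u\jo v)=T_L(u)\cup T_L(v)$, so the real content is the reverse inclusion: no $(u,v)$-Bruhat path — whose labels all lie in the arc $\{r_0,\dots,r_{k-1}\}$ — visits a reflection $r_n$ with $n\notin\{0,\dots,k-1\}$. In the incomparable case the inclusion $T\cap V_{I_2(m)}(u,v)\subseteq T=T_L(w_0)$ is trivial, and the content is that every reflection is visited by some $(u,v)$-Bruhat path.

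For the incomparable case I would use the two "sweeping" paths $e\xrightarrow{r_0}r_0\xrightarrow{r_{m-1}}\rho^{-1}\xrightarrow{r_0}r_1\xrightarrow{r_{m-1}}\rho^{-2}\xrightarrow{r_0}r_2\to\cdots$ and its mirror image under $s\leftrightarrow r$. Both use only the two labels $r_0=s$ and $r_{m-1}=r$, which lie in $T_L(u)\cup T_L(v)$ here; the length formulas show their lengths increase monotonically, that the first path passes through roughly the first half of the reflections $r_0,r_1,r_2,\dots$ in the cyclic order and the second sweeps down through the other half, so jointly they visit all of $T$. For the comparable case I would prove by induction on $l$ that $A_l\in\{0,\dots,k-1\}$ for every $l$: granting $A_{l-1}\in\{0,\dots,k-1\}$ we get $A_l\equiv j_l-A_{l-1}\in\{-(k-1),\dots,k-1\}\pmod m$, so the only way to leave the arc is to have $A_l\equiv -c$ with $A_{l-1}=j_l+c$ and $c\ge1$; spelling out $\ell(x_{l-1})$ and $\ell(x_l)$ from the formulas and using $\ell(x_{l-1})<\ell(x_l)$ then forces $j_l<0$, a contradiction. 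In particular every reflection-vertex $r_{A_l}$ lies in $\{r_0,\dots,r_{k-1}\}=T_L(v)$, which finishes the comparable case.

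The induction in the comparable case is the step I expect to be delicate. When the arc is small — $2k\le m+1$, so that $r_0,\dots,r_{k-1}$ all lie on one chain and the minima in the length formulas are attained by their first arguments — the implication "$\ell(x_{l-1})<\ell(x_l)\Rightarrow j_l<0$" is a one-line computation, and the same is true whenever $A_{l-1}\le m/2$. For a large arc it can happen that $A_{l-1}>m/2$ and that the rotation $\rho^{A_{l-1}}$ and the reflection $r_{A_l}$ lie "more than a quarter turn" from $e$, so that $\min(a,m-1-a)$ and $\min(d,m-d)$ switch branch and the estimate must be organized into a few subcases according to where $A_{l-1}\bmod m$ and $A_l\bmod m$ sit relative to $m/2$. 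An alternative I would keep in reserve: by minimality a shortest path leaving the arc has $x_{l-2}\in T_L(v)$, which reduces everything to the single inequality chain $\ell(r_a)<\ell(\rho^{\,b-a})<\ell(r_{c-b+a})$ for $a,b,c\in\{0,\dots,k-1\}$ — a bounded case-check; and since $\p^+\setminus T_L(v)$ is itself an arc, of size $m-k\le\lfloor(m-1)/2\rfloor$, one might hope to deduce the large-arc case from the small-arc case by a direct duality, which I would look for before grinding the subcases out.
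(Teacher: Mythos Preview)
Your overall architecture matches the paper's: both split into the incomparable case (where $\{s,r\}\subseteq T_L(u)\cup T_L(v)$ and one sweeps out all of $T$ by alternating $s$ and $r$) and the comparable case (where $T_L(u)\cup T_L(v)=T_L(v)$ is a single arc and one must show no $(u,v)$-Bruhat path escapes it). The paper carries out the comparable case by tracking the \emph{reduced expression} of each successive vertex, distinguishing ``small arc'' ($d\le\tfrac{m-1}{2}$) from ``large arc'' and in each case showing the leading letter and length force the vertex to stay on the correct side. Your $A_l$-recursion $A_l\equiv j_l-A_{l-1}$ with $x_l=r_{A_l}$ or $\rho^{A_l}$ is a cleaner, more uniform bookkeeping for the same computation: it replaces the paper's separate word-manipulations by a single alternating-sign sum modulo $m$.

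The part you flag as delicate --- the large arc --- in fact closes with one extra observation that you almost have. If $A_{l-1}\in\{0,\dots,k-1\}$ and $A_l\equiv -c\notin\{0,\dots,k-1\}$, then not only $c\in\{1,\dots,k-1\}$ (from $A_{l-1}-j_l=c$) but also $c\le m-k$ (from $m-c\ge k$). Hence $c<m/2$, so $\ell(x_l)=2c-1$ or $2c$ according to parity, while $A_{l-1}=j_l+c\le k-1$ gives
\[
2j_l+4c \;\le\; 2(k-1)+2c \;\le\; 2(k-1)+2(m-k)\;=\;2m-2.
\]
In ``sub-case B'' (where $A_{l-1}>m/2$ or $(m-1)/2$) the inequality $\ell(x_{l-1})<\ell(x_l)$ unpacks to $2m+1<2j_l+4c$ (odd $l$) or $2m-1<2j_l+4c$ (even $l$), both of which contradict $2j_l+4c\le 2m-2$. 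So no extra subcase grinding or duality trick is needed; the single bound $c\le m-k$ makes the large-arc case fall to the same two-line estimate as the small-arc case. With this filled in your argument is complete and, to my taste, tidier than the paper's.
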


\begin{proof}
    First, suppose that $u\nleq_R v$ and $v\nleq_R u$. In this case, their join is the maximal element $w_0$, so $u\jo v=w_0$. Since $T_L(u\jo v)=T$,
    we must verify that every reflection in $T$ appears as a vertex of a $(u,v)$-Bruhat path. This follows easily from the observation that the reduced expressions of $u$ and $v$ begin with two different generators. Thus, both $r$ and $s$ lie in $T_L(u)\cup T_L(v)$. As $s$ and $r$ generate the whole group $I_2(m)$, any element in $T$ can be reached by a $(u,v)$-Bruhat path, so $T=T(u\jo v)=T\cap V_{I_2(m)}(u,v)$.
    
    We are left to prove the other case: $u\w v$ or $v\w u$. We can suppose that both $u\neq w_0$ and $v\neq w_0$, otherwise we would get $\{s,r\}\subseteq T_L(u)\cup T_L(v)$ and we could conclude as above.
    Assume, without loss of generality, that $u\w v$ and the reduced expression of $u$ starts with $s$.
    Then the reduced expression of $v$ is either $(sr)^h$ or $(sr)^hs$ for some $h\in\mathbb{Z}_{\geq0}$ and, in both cases,
    \begin{equation}\label{eq:TL_v}
        T_L(v)=\{s,srs,\ldots,(sr)^ds\},
    \end{equation}
    for some $d\in\mathbb{Z}_{\geq0}$, where some expressions may not be reduced.
    Since $u\jo v=v$, we have only to check that $T_L(v)=T\cap V_{I_2(m)}(u,v).$
    Furthermore, $u\w v$ is equivalent to $T_L(u)\subseteq T_L(v)$, so, in this case, $(u,v)$-Bruhat paths have labels in $T_L(v)$. Naturally, $T_L(v)\subseteq V_{I_2(m)}(u,v),$ so it suffices to check that $(T\setminus T_L(v))\cap V_{I_2(m)}(u,v)=\emptyset.$
    We split the proof in two cases. 
    
    \textbf{Case 1:} all the expressions in the set in \eqref{eq:TL_v} are reduced, equivalently, $d\leq\frac{m-1}{2}$.\\
    We need to check which vertices we can get in a generic $(u,v)$-Bruhat path. We can take a look at the Bruhat graphs in \cref{fig:Bruhat_Im}.
    
    \begin{figure}[h]
        \includegraphics[scale=1]{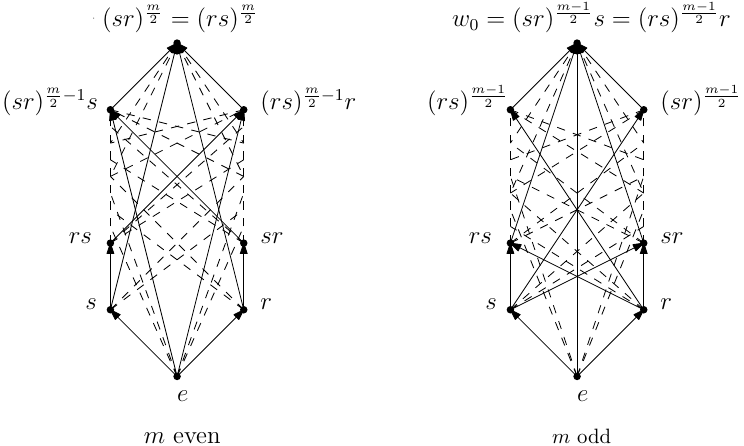}
        \caption{The Bruhat graph $B(I_2(m))$ for $m$ even and odd: the edge labels are omitted.}
        \label{fig:Bruhat_Im}
\end{figure}
    Let $(sr)^is$ and $(sr)^js$ be respectively the first and the second labels of a Bruhat path. Then the third vertex of the path is
    \begin{equation}\label{eq:1step}
        (sr)^js(sr)^is=(sr)^j(rs)^i=
        \begin{cases}
            (sr)^{j-i}, \quad&\emph{ if }j>i,\\
            (rs)^{i-j}, &\emph{ otherwise},
        \end{cases}
    \end{equation}
    where the last equality shows the two possible reduced expressions. Moreover, by definition of Bruhat path, we also require that the length increases from a vertex to its subsequent, hence, in equation~\eqref{eq:1step} we ask that
    $$\l((sr)^is)<\l((sr)^js(sr)^is)\iff
        \begin{cases}
            2i+1<2(j-i), \quad&\emph{ if }j>i,\\
            2i+1<2(i-j), &\emph{ otherwise}.
        \end{cases}$$
    The second inequality is necessarily false, so, the reduced expression of the third vertex can only be of the form $(sr)^l$, with $l<d$. Let $(sr)^ks$ be the third label of the path; then the next vertex is
    $$(sr)^ks(sr)^l=(sr)^k(rs)^{l-1}r=
        \begin{cases}
            (sr)^{k-l}s, \quad&\emph{ if }k>l-1,\\
            (rs)^{l-k-1}r, &\emph{ otherwise},
        \end{cases}$$
    where, of course, we have to require that the length increases, i.e.
    $$\l((sr)^l)<\l((sr)^ks(sr)^l)\iff
        \begin{cases}
            2l<2(k-l)+1, \quad&\emph{ if }k>l-1,\\
            2l<2(l-1-k)+1, &\emph{ otherwise}.
        \end{cases}$$
    Again, the second inequality is false, so we can only get a vertex whose reduced expression is $(sr)^bs$, with $b< d$. 
    At this point, it is easy to note that a repetition of the same argument shows that any reflection reached by a $(u,v)$-Bruhat path has reduced expression of the form $(sr)^bs$ with $b\leq d$. Therefore, since the reflections in $T\setminus T_L(v)$ have reduced expressions $(rs)^ir$ or $(sr)^js$, with $d<j\leq\frac{m-1}{2}$, they cannot be vertices of a $(u,v)$-Bruhat path. Hence, $T\cap V_{I_2(m)}(u,v)=T_L(v)$.

    \textbf{Case 2:} the set in \eqref{eq:TL_v} contains expressions that are not reduced.\\ Now, if $m$ is even, we rewrite the elements of $T_L(v)$ only using reduced expressions as
        $$T_L(v)=\{s,srs,\ldots,(sr)^{\frac{m}{2}-1}s,(rs)^{\frac{m}{2}-1}r,(rs)^{\frac{m}{2}-2}r\ldots,(rs)^{m-d-1}r\},$$
    whereas if $m$ is odd we write it as
        $$T_L(v)=\{s,srs,\ldots,(sr)^{\frac{m-1}{2}}s=(rs)^{\frac{m-1}{2}}r,(rs)^{\frac{m-3}{2}}r\ldots,(rs)^{m-d-2}r\}.$$
    We discuss both cases at the same time. Note that any reflection whose reduced expression starts with $s$ is an element of $T_L(v)$. We consider 
    $$k:=\min\{n\in\mathbb{Z}_{\geq0}\mid (rs)^nr\in T_L(v)\}$$
    and prove that if $0\leq j<k$, then $(rs)^jr\notin V_{I_2(m)}(u,v)$.\\
    Suppose $(rs)^jr\in V_{I_2(m)}(u,v)$, with $0\leq j<k$; then the first label of the path that reaches this reflection must be $(sr)^is$ for some $i<j$, since we need to start with a reflection that is shorter in length. At this point, we have two possibilities for the second label of the path. If it is $(rs)^lr$, for some $k\leq l\leq \frac{m-1}{2}$, then the third vertex of the path is
    \begin{equation}\label{eq:cases_dihedral}
        (rs)^lr(sr)^is=(rs)^l(rs)^{i+1}=
        \begin{cases}
            (rs)^{l+i+1}, \quad&\emph{ if }l+i+1\leq\frac{m}{2},\\
            (sr)^{m-l-i-1}, &\emph{ otherwise}.
        \end{cases}
    \end{equation}
    Note that, since $l\geq k$, the first case of equation~\eqref{eq:cases_dihedral} gives an element which is already longer than $(rs)^jr$, so we do not consider it. \\
    If the second label is $(sr)^ls$, for some $i<l\leq \frac{m-1}{2}$, then, as we computed earlier, the reduced expression of the third vertex is $(sr)^b$.
    So, whatever is the reduced expression of the second label, in order for $(rs)^jr$ to be a vertex of the path, the third vertex is $(sr)^b$ for some $b$. Starting from this vertex, if we label the third edge by $(sr)^ls$, with $l\leq\frac{m-1}{2}$, then, as in case 1, we get a fourth vertex whose reduced expression is $(sr)^cs$, for some $c$. Hence, we would be back to a vertex with the same form of the second one. For this reason, we only have to discuss the other possibility. If the third label is $(rs)^lr$, with $k\leq l\leq \frac{m-1}{2}$, then the fourth vertex of the path is
    \begin{equation}\label{eq:cases_dihedral2}
        (rs)^lr(sr)^b=(rs)^{l+b}r=
        \begin{cases}
            (rs)^{l+b}r, \quad &\emph{ if }l+b\leq\frac{m-1}{2},\\
            (sr)^{m-l-b-1}s, \quad&\emph{ otherwise}.
        \end{cases}
    \end{equation}
    The first case of equation~\eqref{eq:cases_dihedral2} is a reduced expression of the form we are looking for, but its length is necessarily more than $2k+1$. This means that we cannot get $(rs)^jr$, with $j<k$ as a vertex of a $(u,v)$-Bruhat path.
    This concludes the proof, as $T_L(v)=T\cap V_{I_2(m)}(u,v)$ and $T_L(u\jo v)=T_L(v)$.
\end{proof}

\section{Symmetric group}
In this section, we consider the usual combinatorial description of the Coxeter system $(W,S)$ of type $A_{n-1}$. In this framework, $W$ is the symmetric group $S_n$ of permutations of the set $[n]=\{1,2,\ldots,n\}$, $S=\{({i}\ {i+1})\mid i\in[n-1]\}$ is the set of simple transpositions and $T=\{\t{i}{j}\mid 1\leq i<j\leq n\}$ is the set of reflections. We represent a permutation with its \emph{one-line} notation $\s=\s(1)\s(2)\c\s(n)$ and we denote by 
$$\Inv(\s)=\{ (i,j)\in [n]\times[n] \mid i<j \ {\rm and} \ \s(i)>\s(j)\}$$ 
the set of \emph{inversion} of $\s$, whose cardinality is denoted by $\inv(\s)$ and coincides with $\l(\s)$, the Coxeter length of $\s$.
The following result is well-known.
\begin{lemma}\label[lemma]{lem:inversions}
  Let $i,j\in[n]$ be such that $i<j$ and $\s\in S_n$. Then the following are equivalent:  
\begin{itemize}
    \item[(i)] $\t{a}{b}\in T_L(\s)$;
    \item[(ii)] $(\s\1(b),\s\1(a))\in \Inv(\s)$;
    \item[(iii)] $(a,b)\in \Inv(\s\1)$.
\end{itemize}
\end{lemma}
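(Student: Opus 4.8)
The plan is to establish the cyclic chain of equivalences (ii) $\Leftrightarrow$ (iii) $\Leftrightarrow$ (i); the first link is a formal unwinding of the definitions, and the second is the substantive step. Throughout put $p=\s\1(a)$ and $q=\s\1(b)$, so $\s(p)=a$, $\s(q)=b$, and $p\neq q$. For the first link: since $a<b$ by hypothesis, the value comparison $\s(q)=b>a=\s(p)$ is automatic, so $(q,p)\in\Inv(\s)$ holds iff $q<p$, which is exactly (ii); and $(a,b)\in\Inv(\s\1)$ holds iff $a<b$ and $\s\1(a)>\s\1(b)$, i.e. iff $q<p$, which is (iii). Hence (ii) and (iii) both say $\s\1(b)<\s\1(a)$.

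For the main link (i) $\Leftrightarrow$ (ii), recall $\t{a}{b}\in T_L(\s)$ means $\l(\t{a}{b}\s)<\l(\s)$, i.e. $\inv(\t{a}{b}\s)<\inv(\s)$. Since $(\t{a}{b}\s)(i)=\t{a}{b}(\s(i))$, the permutation $\t{a}{b}\s$ is obtained from $\s$ by exchanging the entries $a$ and $b$ of its one-line notation, which occupy positions $p$ and $q$; every other entry, and hence every pair of positions disjoint from $\{p,q\}$, is unaffected. I would then compute $\inv(\t{a}{b}\s)-\inv(\s)$ by a short case analysis of the pairs that do meet $\{p,q\}$: the pair $\{p,q\}$ itself flips, contributing $-1$ if $q<p$ and $+1$ if $p<q$; for an index $k$ strictly between $p$ and $q$, the two pairs $\{k,p\}$ and $\{k,q\}$ both flip, each with the same sign as $\{p,q\}$, exactly when $\s(k)$ lies in the open interval $(a,b)$, and are otherwise unchanged; and for any remaining index $k$ the contributions of $\{k,p\}$ and $\{k,q\}$ cancel. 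Consequently $\inv(\t{a}{b}\s)-\inv(\s)$ equals $-(1+2m)$ when $q<p$ and $+(1+2m')$ when $p<q$, for suitable integers $m,m'\geq 0$; in particular it is negative precisely when $q<p$, i.e. when $\s\1(b)<\s\1(a)$, which is (ii). Combined with the first link, this proves the lemma.

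Alternatively, the main link can be obtained root-theoretically, bypassing the count. In the standard embedding of the geometric representation of $S_n$ in $\mathbb{R}^n$ one has $\a_i=e_i-e_{i+1}$ and $\s(e_i)=e_{\s(i)}$, and $\varphi$ sends the positive root $e_a-e_b$ to the reflection $\t{a}{b}$; since $\p_\s=\p^+\cap\s(\p^-)$ corresponds to $T_L(\s)$ under $\varphi$, we get $\t{a}{b}\in T_L(\s)$ iff $\s\1(e_a-e_b)=e_{\s\1(a)}-e_{\s\1(b)}\in\p^-$, i.e. iff $\s\1(a)>\s\1(b)$, which is again (ii). The only delicate point in either approach is the sign in the main step — immediate in the root-theoretic version, and a brief finite case-check in the combinatorial one — so I anticipate no real obstacle.
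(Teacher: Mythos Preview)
Your proof is correct. The paper itself does not prove this lemma at all: it is introduced with the phrase ``The following result is well-known'' and no argument is supplied. So there is nothing to compare your approach against; you have simply filled in a standard fact that the authors chose to cite without proof. Both of your arguments---the direct inversion count and the root-theoretic version via $\varphi$ and $\Phi_\sigma$---are valid, and the latter aligns naturally with the paper's ambient setup (the bijection $\varphi:\Phi^+\to T$ and the correspondence $\Phi_w\leftrightarrow T_L(w)$ are exactly what the paper uses elsewhere).

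One cosmetic point: the lemma as stated in the paper has a typo (it introduces $i,j$ with $i<j$ but then uses $a,b$ in (i)--(iii)); you correctly read the hypothesis as $a<b$, which is what is needed for your argument and for the subsequent uses of the lemma in the paper.
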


Therefore, for $\s,\q\in S_n$, equation \eqref{eq:inversion-inclusion} becomes $$\s\w\q \ \mbox{if and only if} \ \Inv(\s\1)\subseteq \Inv(\q\1).$$
Before proving the conjecture for symmetric groups, we check its statement in the following example.

\begin{figure}[h]
    \centering
    \includegraphics[scale=1]{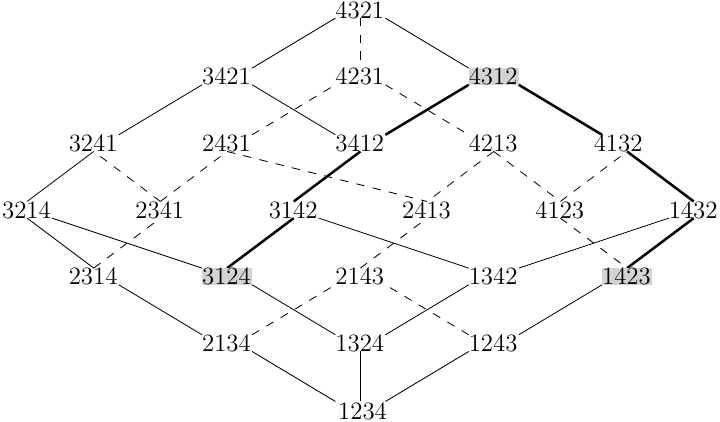} 
    \caption{Hasse diagram of $(S_4,\w)$ from example \ref{ex:example_S4}.}
    \label{fig:Weak_S_4}
\end{figure}

\begin{example}\label{ex:example_S4}
    Consider $\s=3124$ and $\q=1423$ in $S_4$. We have $T_L(\s)=\{\t{1}{3},\t{2}{3}\}$, $T_L(\q)=\{\t{2}{4},\t{3}{4}\}$ and from \cref{fig:Weak_S_4} we see that $\s\jo\q=4312$, so we compute $T_L(\s\jo\q)=\{\t{1}{3},\t{1}{4},\t{2}{3},\t{2}{4},\t{3}{4}\}.$
    Let us verify that the conjecture holds for this example, i.e. $T_L(\s\jo\q)=T\cap V_{S_4}(\s,\q).$
    
    By definition of $(\s,\q)$-Bruhat path, we have $T_L(\s)\cup T_L(\q)\subseteq T\cap V_{S_4}(\s,\q),$
    as $T_L(\s)\cup T_L(\q)$ is the set of labels of $(\s,\q)$-Bruhat paths; furthermore we have the path
    $$1234\xrightarrow{\t{1}{3}}3214\xrightarrow{\t{3}{4}}4213\xrightarrow{\t{1}{3}}4231=\t{1}{4},$$
    thus, $\t{1}{4}\in V_{S_4}(\s,\q)$ and $T_L(\s\jo\q)\subseteq T\cap V_{S_4}(\s,\q).$
    Now, since $T\setminus T_L(\s\jo\q)=\{\t{1}{2}\}$, we can look at all possible $(\s,\q)$-Bruhat paths which are represented in \cref{fig:Bruhat_paths} and check that $\t{1}{2}\notin V_{S_4}(\s,\q)$, so the conjecture holds. We can also argue that $\t{1}{2}$ is a simple reflection, therefore it is a vertex of a $(\s,\q)$-Bruhat path if and only if it is in $T_L(\s)\cup T_L(\q)$.
\end{example}

\begin{figure}[h]
    \centering
     \centering
        \includegraphics[scale=1]{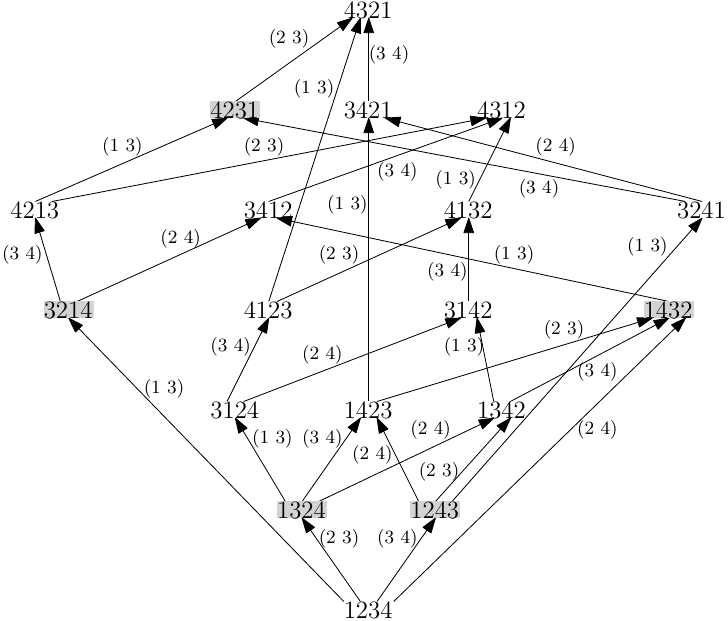}
        \caption{A subgraph of $B(S_4)$ representing every possible $(\s,\q)$-Bruhat path where reflections are highlighted.}
        \label{fig:Bruhat_paths}
\end{figure}

In order to prove \cref{conjecture_2} we use a known result about left-reflection sets of the join of two permutations. First, we say a set of transpositions $J\subseteq T$ is \emph{transitively closed} if for any $\t{i}{j},\t{j}{k}\in J$ with $i<j<k$, then $\t{i}{k}\in J$.
By \cref{lem:inversions},   $\t{i}{j}\in T_L(\s)$ if and only if $i<j$ and $\s\1(i)>\s\1(j)$, so $T_L(\s)$ is transitively closed.
The following theorem is well-known and its proof can be found in \cite[Theorem 1(b)]{markowsky1994permutation}.
\begin{theorem}\label{thm:transitive_T_L}
Let $\s,\q\in S_n$ and denote by $J^{tc}$ the transitive closure of a subset $J\subseteq T$; then
    $$T_L(\s\jo\q)=(T_L(\s)\cup T_L(\q))^{tc}.$$
\end{theorem}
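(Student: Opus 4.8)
The statement asserts $T_L(\s\jo\q)=(T_L(\s)\cup T_L(\q))^{tc}$; write $C$ for the right‑hand side (the join exists since $(S_n,\w)$ is a finite lattice). My plan rests on the following dictionary, all contained in or immediate from the excerpt: by \eqref{eq:inversion-inclusion} one has $u\w v\iff T_L(u)\subseteq T_L(v)$, and $w$ is determined by $T_L(w)$; by \cref{lem:biclosed} together with the bijection $\varphi$, a subset $A\subseteq T$ equals $T_L(w)$ for some $w\in S_n$ exactly when the corresponding set of roots is biclosed, and in type $A$ this says precisely that $A$ \emph{and} $A^c$ are transitively closed (for the $S_n$ root system the only positive‑root identity of the form $a\alpha+b\beta\in\Phi^+$ with $\alpha,\beta\in\Phi^+$, $a,b>0$, is $(e_i-e_j)+(e_j-e_k)=e_i-e_k$ with $i<j<k$, so the paper's root‑theoretic "closed/coclosed" become exactly "$A$ transitively closed / $A^c$ transitively closed"); and finally $\t{a}{b}\in D^{tc}$ iff there is an increasing chain $a=a_0<\dots<a_r=b$ with every $\t{a_{i-1}}{a_i}\in D$. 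Now $T_L(\s\jo\q)$ is transitively closed and contains $T_L(\s)$ and $T_L(\q)$, hence contains $C$; so it remains to prove $T_L(\s\jo\q)\subseteq C$, and for this it suffices to know that $C$ is a left‑reflection set, $C=T_L(\rho)$ — for then $T_L(\rho)\supseteq T_L(\s),T_L(\q)$ forces $\s\w\rho$ and $\q\w\rho$, whence $\s\jo\q\w\rho$ and $T_L(\s\jo\q)\subseteq T_L(\rho)=C$. Since $C$ is transitively closed by construction, everything reduces to the single claim: $C^c$ is transitively closed.

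To prove this, assume for contradiction that there are $a<b<c$ with $\t{a}{b},\t{b}{c}\in C^c$ while $\t{a}{c}\in C$, and fix a chain $a=a_0<\dots<a_r=c$ with each $\t{a_{i-1}}{a_i}\in T_L(\s)\cup T_L(\q)$. No $a_j$ can equal $b$, since truncating such a chain would give $\t{a}{b}\in C$; so there is a unique index $j$ with $a_{j-1}<b<a_j$. Say $\t{a_{j-1}}{a_j}\in T_L(\s)$ (the other case is verbatim the same). Because $T_L(\s)$ is biclosed, its complement is transitively closed, so $\t{a_{j-1}}{b}$ and $\t{b}{a_j}$ cannot both lie in $T_L(\s)^c$; thus one of them lies in $T_L(\s)\subseteq T_L(\s)\cup T_L(\q)$. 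If $\t{a_{j-1}}{b}\in T_L(\s)$, then $a=a_0<\dots<a_{j-1}<b$ is a chain over $T_L(\s)\cup T_L(\q)$, giving $\t{a}{b}\in C$, a contradiction; if $\t{b}{a_j}\in T_L(\s)$, then $b<a_j<\dots<a_r=c$ gives $\t{b}{c}\in C$, again a contradiction. Hence $C^c$ is transitively closed, so $C=T_L(\rho)$ for a unique $\rho$, and by the previous paragraph $T_L(\s\jo\q)=C=(T_L(\s)\cup T_L(\q))^{tc}$, as desired.

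I expect the only real subtlety to be the first bullet of the dictionary — translating "biclosed set of roots" into the elementary condition "transitively closed set of transpositions with transitively closed complement", i.e. making \cref{lem:biclosed} and the paper's definition of closedness explicit in the combinatorics of $S_n$; once that translation is secured, the heart of the matter is the short straddling‑edge argument of the second paragraph, and the identification $\rho=\s\jo\q$ is purely formal lattice theory. One can also avoid the root system entirely: encode $\t{x}{y}\notin T_L(\s)$ as "$x$ precedes $y$ in the one‑line notation of $\s\1$", and argue directly that no increasing chain over $T_L(\s)\cup T_L(\q)$ can join $a$ to $c$ when $a$ precedes $b$ precedes $c$ in both $\s\1$ and $\q\1$; but appealing to biclosedness is more economical.
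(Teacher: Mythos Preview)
Your argument is correct. The paper itself does not prove this theorem at all: it merely quotes it as well known and points to \cite[Theorem~1(b)]{markowsky1994permutation}, so there is no ``paper's proof'' to compare against. What you have written is a valid self-contained proof that fits neatly into the paper's framework, since it leverages exactly \cref{lem:biclosed} and the root/reflection dictionary already set up there.

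Two minor comments. First, your translation ``biclosed $\Leftrightarrow$ $A$ and $A^c$ transitively closed in type $A$'' is right, and your parenthetical justification (the only nontrivial positive dependence among $A_{n-1}$ roots is $(e_i-e_j)+(e_j-e_k)=e_i-e_k$) is the correct check; this is indeed the only point where type $A$ is used. Second, the heart of your proof, the straddling-edge step, is really just the \emph{coclosedness} of $T_L(\s)$ (equivalently, $\sigma^{-1}$ is a genuine linear order on $[n]$): if $\t{a_{j-1}}{a_j}\in T_L(\s)$ and $a_{j-1}<b<a_j$, then $\s^{-1}$ cannot place $b$ both after $a_{j-1}$ and before $a_j$. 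Phrased this way the argument is essentially the classical one (and is close in spirit to Markowsky's), but routing it through \cref{lem:biclosed} as you do is arguably more in keeping with the paper's point of view and would generalize more readily to other types once one identifies the analogue of ``transitive closure'' there.
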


We prove the conjecture by double inclusion. We start by showing that any left-reflection of the join $\s\jo\q$ is a vertex of a $(\s,\q)$-Bruhat path. A Bruhat path is called \emph{palindromic} if its sequence of labels is palindromic.

\begin{theorem}\label[lemma]{lem:first_inclusion_S_n}
    Let $\s,\q\in S_n$, then 
    $$T_L(\s\jo\q)\subseteq T\cap V_{S_n}(\s,\q).$$
    In particular, for any $t\in T_L(\s\jo\q)$ there is a palindromic $(\s,\q)$-Bruhat path from $e$ to $t$. 
\end{theorem}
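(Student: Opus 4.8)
The goal is to show that every $t\in T_L(\s\jo\q)$ is reachable from $e$ by a $(\s,\q)$-Bruhat path, and in fact by a palindromic one. By \cref{thm:transitive_T_L} we know $T_L(\s\jo\q)=(T_L(\s)\cup T_L(\q))^{tc}$, so it is enough to treat two cases: first, $t\in T_L(\s)\cup T_L(\q)$, and second, a transposition $t=\t{i}{k}$ that arises from the transitive-closure step because $\t{i}{j},\t{j}{k}$ already belong to $T_L(\s\jo\q)$ with $i<j<k$. The first case is immediate: if $t=\t{a}{b}\in T_L(\s)\cup T_L(\q)$ then the single-edge path $e\xrightarrow{t}t$ is a $(\s,\q)$-Bruhat path, trivially palindromic. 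So the content is entirely in the transitive-closure step, and the natural strategy is \emph{induction} — say on $k-i$, or on the number of transitive-closure operations needed to produce $t$ from $T_L(\s)\cup T_L(\q)$.

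\textbf{The inductive step.} Suppose $t=\t{i}{k}$ with $\t{i}{j},\t{j}{k}\in T_L(\s\jo\q)$, $i<j<k$, and by the inductive hypothesis we have palindromic $(\s,\q)$-Bruhat paths $P_1\colon e\to \t{i}{j}$ and $P_2\colon e\to\t{j}{k}$. Write $\t{i}{j}=c_1\cdots c_p c_{p-1}\cdots c_1$ (odd-length palindrome, middle letter sitting at the apex) and similarly $\t{j}{k}=d_1\cdots d_q\cdots d_1$, with all $c_\l,d_\l\in T_L(\s)\cup T_L(\q)$. The key algebraic identity is that a product of two reflections, read in the right order, gives the third: concretely $\t{i}{k}=\t{j}{k}\,\t{i}{j}\,\t{j}{k}$ (or $\t{i}{j}\,\t{j}{k}\,\t{i}{j}$), which one checks by direct computation on the three points $i,j,k$. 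The plan is therefore to concatenate: build the path
\[
e \longrightarrow \t{j}{k} \longrightarrow \t{i}{j}\,\t{j}{k} \longrightarrow \t{j}{k}\,\t{i}{j}\,\t{j}{k}=\t{i}{k},
\]
where the first leg is $P_2$, the middle leg prepends the letters of $P_1$ (i.e. multiplies successively by $c_1,\dots,c_p,c_{p-1},\dots,c_1$ on the left), and the last leg prepends the reverse of $P_2$. The resulting label sequence is $d_1\cdots d_q\cdots d_1\,c_1\cdots c_p\cdots c_1\,d_1\cdots d_q\cdots d_1$, which is a palindrome since it is a palindrome-concatenation of the form $R\,Q\,R$ with $R$ and $Q$ themselves palindromes. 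All labels still lie in $T_L(\s)\cup T_L(\q)$, so the word is admissible for a $(\s,\q)$-path; what remains is to verify the length-increase condition along every edge.

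\textbf{The main obstacle.} The real work — and the place where the argument could go wrong — is checking that the concatenated walk is genuinely a Bruhat \emph{path}, i.e. that length strictly increases at each of the roughly $2(2p-1)+(2q-1)$ steps. The first leg $P_2$ is increasing by hypothesis. The trouble is the middle and final legs, where we are multiplying on the left by reflections and there is a priori no reason each multiplication raises the length relative to the new base point $\t{j}{k}$ rather than relative to $e$. I expect one needs a combinatorial lemma controlling inversions: for $\s,\q\in S_n$, if $\t{a}{b}\in T_L(\s)\cup T_L(\q)$ and $\pi$ is a partial product already built, then $\l(\t{a}{b}\pi)>\l(\pi)$ under suitable hypotheses on which indices $\pi$ has ``used up''. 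One clean way to arrange this is to be careful about the \emph{order} in which the transitive closure is performed and about \emph{which} reduced forms of $\t{i}{j}$ and $\t{j}{k}$ one picks — e.g. always route through the index $j$ in a monotone fashion so that the supports of the reflections nest correctly, using \cref{lem:inversions} to translate each required inequality $\l(t\pi)>\l(\pi)$ into a statement $(a,b)\in\Inv((t\pi)\1)$ about one-line notation. A slicker alternative, which I would try first, is a \emph{direct} construction avoiding the two-step induction: given $t=\t{i}{k}\in(T_L(\s)\cup T_L(\q))^{tc}$, transitive closure guarantees a chain $i=j_0<j_1<\dots<j_r=k$ with each $\t{j_{l-1}}{j_l}\in T_L(\s)\cup T_L(\q)$; then apply the three-term identity repeatedly to write $\t{i}{k}$ as a palindromic word in these ``atomic'' transpositions and verify the length condition once, using that the atomic transpositions have pairwise-comparable supports along the chain. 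Either way, the combinatorial bookkeeping of inversions via \cref{lem:inversions} is the crux, and getting the monotonicity of the chain right is what makes the palindromic path legitimate.
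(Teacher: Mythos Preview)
Your ``slicker alternative'' (b) is exactly the paper's proof. Given $t=\t{a}{b}$ and a chain $a=j_0<j_1<\cdots<j_r=b$ with each $\t{j_{l-1}}{j_l}\in T_L(\s)\cup T_L(\q)$, the paper writes the explicit palindrome
\[
\t{a}{j_1}\,\t{j_1}{j_2}\cdots\t{j_{r-2}}{j_{r-1}}\,\t{j_{r-1}}{b}\,\t{j_{r-2}}{j_{r-1}}\cdots\t{a}{j_1}
\]
of length $2r-1$ and verifies the length condition by the direct inversion count you anticipate: along the ascending half the $k$-th partial product is a single cycle on $\{a,j_1,\dots,j_k\}$ and one finds $\inv(\sigma_k)-\inv(\sigma_{k-1})=2(j_{k+1}-j_k)-1>0$; along the descending half each step adds exactly one inversion. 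So the ``combinatorial bookkeeping'' you flag as the crux is a two-line computation once this word is written down.

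Your inductive scheme (a), by contrast, does not work as written. The concatenated word $P_2\,P_1\,P_2$ has $2|P_2|+|P_1|$ letters, so unless $P_2$ is a single step the recursion produces a word of length $2^r-1$ rather than $2r-1$. But a Bruhat path from $e$ to $\t{a}{b}$ has at most $\l(\t{a}{b})=2(b-a)-1$ edges, and already for the chain $1<2<3<4$ in $S_4$ your recipe (with $j=2$) gives a seven-letter palindrome ending at $\t{1}{4}$, whose length is $5$ --- so some step must go down (indeed the fifth step is $2431\xrightarrow{\t{3}{4}}2341$, a drop from length $4$ to $3$). This is not fixable by ``being careful about order'' in any way that keeps $P_2$ non-trivial. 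The only salvage is to force one of $P_1,P_2$ to be a single atomic step at every stage of the recursion; but then the recursion simply unwinds to the explicit $(2r-1)$-letter palindrome of approach (b), and you still owe the same inversion count the paper carries out. So discard (a) and execute (b).
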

\begin{proof}
    Let $t=\t{a}{b}$, with $1\leq a<b\leq n$ and suppose $t\in T_L(\s)\cup T_L(\q)$; then $e\xrightarrow{t}t$ is a palindromic $(\s,\q)$-Bruhat path.
    On the other hand, if $t\notin T_L(\s)\cup T_L(\q)$, by Theorem~\ref{thm:transitive_T_L}, there exists a chain $a=i_0<i_1<i_2<\c<i_{l-1}<i_l=b,$ with $l\geq 2,$
    such that $\t{i_r}{i_{r+1}}\in T_L(\s)\cup T_L(\q),$ for any $r\in\{0,1,\ldots,l-1\}$. 
    Furthermore, $t$ is obtained by product of transpositions as follows:
    $$t=\t{a}{b}=\t{a}{i_1}\t{i_1}{i_2}\c\t{i_{l-2}}{i_{l-1}}\t{i_{l-1}}{b}\t{i_{l-2}}{i_{l-1}}\c\t{a}{i_1}.$$
    Now, in order to prove the theorem is sufficient to show that the palindromic path 
    \begin{equation*}
            e\xrightarrow{\t{a}{i_1}}\t{a}{i_1}\xrightarrow{\t{i_1}{i_2}}\t{i_1}{i_2}\t{a}{i_1}\xrightarrow{\t{i_2}{i_3}}
            \c\xrightarrow{\t{i_1}{i_2}}\t{a}{i_1}\t{a}{b}\xrightarrow{\t{a}{i_1}}\t{a}{b}.
    \end{equation*}
    is a $(\s,\q)$-Bruhat path. We do this in two steps.
  \smallskip
  
    \textbf{Step 1:} Let us consider for $k\in[l-1]$, 
    \begin{equation}\label{eq:new_notation_1}
        \s_k=\t{i_k}{i_{k+1}}\c\t{i_1}{i_2}\t{a}{i_1}= \begin{bmatrix}
        a & i_1 & i_2 & \c & i_k & i_{k+1}\\
        i_{k+1} & a & i_1 & \c & i_{k-1} & i_k
        \end{bmatrix},
    \end{equation}
    where on the right-hand side we display only the indices appearing in the two-line notation of $\s_k$ that are relevant in our computation. In particular, all the indices that do not appear are fixed points.
    Similarly, we have
    \begin{equation}\label{eq:new_notation_2}
        \s_{k-1}=\t{i_{k-1}}{i_k}\c\t{i_1}{i_2}\t{a}{i_1}= \begin{bmatrix}
        a & i_1 & i_2 & \c & i_k & i_{k+1}\\
        i_k & a & i_1 & \c & i_{k-1} & i_{k+1}
        \end{bmatrix}.
    \end{equation}
    We need to check that for any $k\in[l-1]$, 
    \begin{equation}\label{eq:length_growing}
        \inv(\s_k)>\inv(\s_{k-1}).
    \end{equation}
    From \eqref{eq:new_notation_1} and \eqref{eq:new_notation_2}, we see that 
    \begin{equation}\label{eq:inv_count_1}
        \begin{split}
            &\inv(\s_k)=\inv(\s_{k-1})+(i_{k+1}-i_k)+(i_{k+1}-i_k-1)
        \end{split}
    \end{equation}
    as in $\s_k$ we have $(i_{k+1}-i_k)$ more inversions in which the first index is $a$ and $(i_{k+1}-i_k-1)$ more inversions in which the second index is $i_{k+1}$ and the first is not $a$. Thus, equation~\eqref{eq:inv_count_1} yields $\inv(\s_k)=\inv(\s_{k-1})+2(i_{k+1}-i_k)-1$
    and, since $2(i_{k+1}-i_k)-1>0$, for any $k\in[l-1]$, the inequality in \eqref{eq:length_growing} is satisfied.
\smallskip

    \textbf{Step 2:} we call $\q_k:=\t{i_{l-k}}{i_{l-k+1}}\c\t{i_{l-2}}{i_{l-1}}\t{i_{l-1}}{b}\c\t{i_1}{i_2}\t{a}{i_1}$ and check that $\inv(\q_k)>\inv(\q_{k-1})$, for any $k\in\{2,3,\ldots,l\}$.
    We use the notation introduced earlier getting
    $$\q_{k-1}=\begin{bmatrix}
        a & i_1 & i_2 & \c & i_{l-k+1} & b\\
        b & a & i_1 & \c & i_{l-k} & i_{l-k+1}
        \end{bmatrix}$$
    and
    $$\q_k=\begin{bmatrix}
        a & i_1 & i_2 & \c & i_{l-k} & i_{l-k+1} & b\\
        b & a & i_1 & \c & i_{l-k-1} & i_{l-k+1} & i_{l-k}
        \end{bmatrix}.$$
    We note that $\q_k$ has $(i_{l-k+1}-i_{l-k}-1)$ less inversions in which the last index is $i_{l-k+1}$ and $(i_{l-k+1}-i_{l-k})$ more inversions in which the second index is $b$. Thus, 
    $$\inv(\q_k)=\inv(\q_{k-1})-(i_{l-k+1}-i_{l-k}-1)+(i_{l-k+1}-i_{l-k})=\inv(\q_{k-1})+1,$$
    therefore, $\inv(\q_k)>\inv(\q_{k-1})$, for any $k\in[l]$.
    This concludes the proof.
\end{proof}

It remains to prove the inclusion $T\cap V_{S_n}(\s,\q)\subseteq T_L(\s\jo\q)$.
In order to do that, we first establish a property of Bruhat paths starting from $e$ that reach a reflection $\t{a}{b}\in T$. Although the following result is probably known in some form, we are unaware of a suitable reference, so we include a proof here. 

\begin{lemma}\label[lemma]{lem:T_ab}
    Let $\t{a}{b}\in S_n$ with $1 \leq a<b\leq n$; then all the edges of a Bruhat path from $e$ to $\t{a}{b}$ are labeled by elements of $T_{ab}:=\{\t{i}{j}\mid a\leq i<j\leq b\}.$
\end{lemma}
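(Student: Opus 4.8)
The plan is to argue by induction on the length of the Bruhat path from $e$ to $\t{a}{b}$, tracking how the one-line notation of the intermediate permutations evolves. Let $e = \s_0 \xrightarrow{t_1} \s_1 \xrightarrow{t_2} \cdots \xrightarrow{t_m} \s_m = \t{a}{b}$ be such a path, so each $t_k \in T$, $\s_k = t_k \s_{k-1}$, and $\inv(\s_k) > \inv(\s_{k-1})$. Recall that left multiplication by a reflection $\t{p}{q}$ swaps the positions of the values $p$ and $q$ in the one-line notation, and that increasing length means the swap creates an inversion (the value $p$, say with $p<q$, currently sits to the left of $q$). I would introduce, for each $\s_k$, the set $D_k = \{ i \in [n] : \s_k(i) \neq i \}$ of non-fixed points, or rather the ``support interval'' $[\min D_k, \max D_k]$. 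The key claim is that throughout the path we have $D_k \subseteq [a,b]$, in fact the values moved are always drawn from $\{a, a+1, \ldots, b\}$, which immediately forces every label $t_k = \t{p}{q}$ to satisfy $a \le p < q \le b$, i.e. $t_k \in T_{ab}$.

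The core of the argument is a monotonicity observation: I claim the set $\s_k^{-1}(\{a,\ldots,b\}) \cap \{\text{non-fixed positions}\}$ — or more cleanly, the collection of values that have ever been moved — can never escape the interval $[a,b]$ and must return, by the end, to being exactly a rearrangement supported on that interval (since $\t{a}{b}$ moves only $a$ and $b$, sending $a \leftrightarrow b$). Concretely, since $\s_m = \t{a}{b}$ fixes every value outside $\{a,b\}$, the first step of the argument is to run the path \emph{backwards}: each step back from $\s_k$ to $\s_{k-1}$ also swaps two values and \emph{decreases} length. I would instead argue forward and use the following invariant: at every stage $k$, every value $v$ with $v < a$ or $v > b$ satisfies $\s_k(v) = v$, equivalently $\s_k$ restricts to a permutation of $\{a, \ldots, b\}$. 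Suppose this holds for $\s_{k-1}$ but the next label is $t_k = \t{p}{q}$ with (say) $q > b$. For the length to increase, $p$ must lie to the left of $q$ in $\s_{k-1}$; but $\s_{k-1}(q) = q$ by the invariant, so value $q$ sits in position $q$, and value $p$ sits in some position $<q$. After the swap, value $q$ moves to a position $< q$, so $\s_k(q) \neq q$. The point is then to show this damage can never be undone: once a value $v \notin [a,b]$ becomes displaced, the only future swaps that touch it keep it displaced or move it further, so $\s_m(v) \neq v$, contradicting $\s_m = \t{a}{b}$. This ``no return'' statement is the delicate part and is where I expect the main obstacle to lie.

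To make the ``no return'' statement precise I would track the quantity $\s_k^{-1}(v) - v$ for a value $v > b$ (the signed displacement of value $v$ from its home position). Initially it is $0$. A length-increasing swap $\t{p}{q}$ with $p < q$ takes value $p$ rightward and value $q$ leftward; so the displacement of the larger value in any swap strictly decreases and that of the smaller strictly increases. For a value $v > b$ that is ever swapped, note $v$ is the \emph{largest} value among $\{a,\ldots,b,v\}$, so whenever $v$ participates in a length-increasing swap with a partner in $[a,b]$, $v$ plays the role of the larger element and its displacement strictly decreases — i.e. $v$ only ever moves left and never comes back to position $v$ from the left. More carefully, $\s_k^{-1}(v) \le \s_{k-1}^{-1}(v)$ always, with strict inequality exactly when $v$ is swapped; and swapping $v$ with a value larger than $v$ is impossible here because no such value is ever displaced from beyond $b$ until $v$ is (a minimal-counterexample / induction on the largest displaced value handles the bookkeeping). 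Hence $\s_m^{-1}(v) < v$, so $\s_m \neq \t{a}{b}$, a contradiction. The symmetric argument with $\s_k(v) - v$ for values $v < a$ (these are the \emph{smallest} values and can only move right) closes the case $p < a$. Therefore no label $\t{p}{q}$ can have $q > b$ or $p < a$, which is exactly the assertion $t_k \in T_{ab}$ for all $k$.

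A cleaner packaging, which I would try first, is to avoid signed-displacement bookkeeping entirely: set $M_k = \max\{\s_k(i) : i \in [n]\} \vee \max\{ v : \s_k^{-1}(v) \ne v\}$ — really just track $m_k := \max\{v : \s_k(v)\neq v \text{ or } \s_k^{-1}(v) \neq v\}$, the largest value not fixed by $\s_k$. One shows $m_k$ is nonincreasing along a length-increasing path (a swap $\t{p}{q}$ with $p<q$ and length increasing can only make $q$ displaced if $p<q \le m_{k-1}$ already, since for value $q$ to be currently left of its home position $q$ something of value $>q$ must occupy position $q$ or beyond — wait, this needs the inductive hypothesis that positions $> m_{k-1}$ are fixed), hence $m_k \le m_0 = 0$ would be absurd, so the right statement is $m_k \le m_{k-1}$, giving $m_k \le m_m$; but $m_m = b$ since $\t{a}{b}$ fixes everything above $b$, forcing every displaced value throughout to be $\le b$, hence $q \le b$ in every label. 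The dual statement with the minimal displaced value gives $p \ge a$. This is the argument I would write up, with the nonincreasing-maximum lemma as the single technical step; the obstacle is verifying that lemma carefully, since it is exactly where ``length increases'' is used and one must be careful about which of $p, q$ the hypothesis constrains.
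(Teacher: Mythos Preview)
Your extremal--value displacement argument is correct and, once stated cleanly, proves the lemma: let $v$ be the largest value appearing in any label along the path; whenever $v$ participates in a length--increasing swap it is necessarily the larger of the two values, so its position $\sigma_k^{-1}(v)$ strictly decreases each time it is touched; since $v$ is swapped at least once, $\sigma_m^{-1}(v)<v$, which contradicts $\sigma_m=(a\ b)$ unless $v\le b$. The dual argument with the smallest value gives the lower bound $a$. This is a genuinely different route from the paper's proof, which runs the induction \emph{backward} from $(a\ b)$: assuming the last $r-1$ labels lie in $T_{ab}$, the intermediate vertex $\beta=t_{r-1}\cdots t_1\,(a\ b)$ is a product of elements of $T_{ab}$ and hence permutes $\{a,\ldots,b\}$ and fixes its complement; the condition $t_r\in T_L(\beta)$, i.e.\ $\beta^{-1}(i_r)>\beta^{-1}(j_r)$, is then checked against the three ways $(i_r\ j_r)$ could lie outside $T_{ab}$, each giving a contradiction. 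Your forward argument avoids the case analysis; the paper's backward argument has the advantage of making explicit the invariant ``every intermediate vertex permutes $[a,b]$'', which is precisely what is invoked in the proof of the subsequent theorem.

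Your ``cleaner packaging'' via $m_k=\max\{v:\sigma_k(v)\neq v\}$ is salvageable but not as written. The quantity $m_k$ is not nonincreasing --- it starts at $-\infty$ and ends at $b$ --- and your parenthetical claim that a length--increasing swap $(p\ q)$ can displace $q$ only when $q\le m_{k-1}$ is false (take $\sigma_{k-1}=e$). The correct statement is that $m_k$ is \emph{nondecreasing}: since every position $>m_{k-1}$ is occupied by its own value, a length--increasing swap either leaves the value $m_{k-1}$ displaced or introduces a larger displaced value. Nondecreasing then yields $m_k\le m_m=b$ for all $k$, and dually for the minimum, which is the bound you want; but the chain of inequalities you wrote (``$m_k\le m_{k-1}$, giving $m_k\le m_m$'') has the direction reversed.
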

\begin{proof}
    Let $\t{i_1}{j_1},\t{i_2}{j_2},\ldots,\t{i_h}{j_h}\in T$ be the labels of a Bruhat path from $e$ to $\t{a}{b}$ listed from last to first, i.e. $\t{a}{b}=\t{i_1}{j_1}\c\t{i_{h-1}}{j_{h-1}}\t{i_h}{j_h}.$\\
    We prove that $\t{i_r}{j_r}\in T_{ab}$, for any $r\in[h]$, by induction on $r$. 
    Since $\t{a}{b}$ is the last vertex of the path, then the label of the last edge $\t{i_1}{j_1}$ is an element of $T_L(\t{a}{b})$.

It is easy to note that $T_L(\t{a}{b})=\{\t{a}{j}\mid a<j\leq b\}\cup\{ \t{i}{b}\mid a< i<b\};$
    therefore, $T_L(\t{a}{b})\subseteq T_{ab}$, thus, $\t{i_1}{j_1}\in T_{ab}$ verifying the base case of induction.
    Suppose $r>1$ and assume, by inductive hypothesis, that $\t{i_l}{j_l}\in T_{ab}$, for any $l\in[r-1]$. Since the length in a Bruhat path must increase at every step, we know that 
    \begin{equation}\label{eq:lemma_Bruhat_path}
        \t{i_r}{j_r}\in T_L(\t{i_{r-1}}{j_{r-1}}\c\t{i_1}{j_1}\t{a}{b}).
    \end{equation}
    Call $\b:=\t{i_{r-1}}{j_{r-1}}\c\t{i_1}{j_1}\t{a}{b}$ and suppose $\t{i_r}{j_r}\notin T_{ab}$ and note that, since we assume $i_r<j_r$, by \cref{lem:inversions}, \eqref{eq:lemma_Bruhat_path} is equivalent to 
    \begin{equation}\label{eq:lemma_Bruhat_path_2}
        \b\1(i_r)>\b\1(j_r).
    \end{equation} 
    Recall that by inductive hypothesis, $i_k,j_k\in\{a,a+1,\ldots,b\},$ for any $k\in[r-1]$. We observe that since $\b$ is product of elements of $T_{ab}$, then $\b(k)\in\{a,a+1,\ldots,b\},$ for any $k\in\{a,a+1,\ldots,b\}$ and $\b(i)=i$, for any $i\in[n]\setminus\{a,a+1,\ldots,b\}$. 
    
    We distinguish 3 cases:
    \begin{enumerate}
        \item if $\{i_r,j_r\}\cap\{a,a+1,\ldots,b\}=\emptyset$, then  
        inequality~\eqref{eq:lemma_Bruhat_path_2} implies
        \begin{align*}
            i_r=\t{a}{b}\t{i_1}{j_1}\c\t{i_{r-1}}{j_{r-1}}(i_r)>\t{a}{b}\t{i_1}{j_1}\c\t{i_{r-1}}{j_{r-1}}(j_r)=j_r,
        \end{align*}
        but $i_r<j_r$ from which we get a contradiction;
        \item if $i_r<a$ and $j_r\in\{a,a+1,\ldots,b\}$, then by inequality~\eqref{eq:lemma_Bruhat_path_2}, we obtain $i_r>\b\1(j_r);$
        but $\b\1(j_r)\in\{a,a+1,\ldots,b\}$ so, we have a contradiction since $a>i_r$;
        \item if $i_r\in\{a,a+1,\ldots,b\}$ and $j_r>b$, inequality~\eqref{eq:lemma_Bruhat_path_2} yields $\b\1(i_r)>j_r$ and since $\b\1(i_r)\in\{a,a+1,\ldots,b\}$, again we end up with a contradiction.
    \end{enumerate}
    Therefore, necessarily $\t{i_r}{j_r}\in T_{ab}$, so, by induction, every edge of a Bruhat path from $e$ to $\t{a}{b}$ is labeled by a reflection in $T_{ab}$.
\end{proof}

\begin{theorem}
    Let $\s,\q\in S_n$; then 
    $$T_L(\s\jo\q)=T\cap V_{S_n}(\s,\q).$$
\end{theorem}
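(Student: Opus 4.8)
The plan is to prove the remaining inclusion $T \cap V_{S_n}(\s,\q) \subseteq T_L(\s \jo \q)$, since the reverse inclusion is exactly \cref{lem:first_inclusion_S_n} and combining the two gives the equality. So let $t = \t{a}{b} \in T \cap V_{S_n}(\s,\q)$ with $1 \le a < b \le n$; I must show $\t{a}{b} \in T_L(\s\jo\q) = (T_L(\s) \cup T_L(\q))^{tc}$ by \cref{thm:transitive_T_L}. By hypothesis there is a $(\s,\q)$-Bruhat path from $e$ to $\t{a}{b}$, and by \cref{lem:T_ab} every label of this path lies in $T_{ab} = \{\t{i}{j} \mid a \le i < j \le b\}$; moreover by the definition of $(\s,\q)$-Bruhat path every label lies in $T_L(\s) \cup T_L(\q)$. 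Write $J := (T_L(\s) \cup T_L(\q)) \cap T_{ab}$, so the path uses only labels from $J$.

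First I would record a structural observation about $J$: since $T_L(\s)$ and $T_L(\q)$ are each transitively closed, and the intersection with $T_{ab}$ preserves transitive closedness within the range $[a,b]$, the set $J$ is a union of two transitively closed sets of transpositions supported on $\{a, a+1, \ldots, b\}$. The key claim I want is: \emph{if there is a Bruhat path from $e$ to $\t{a}{b}$ with all labels in $J$, then $\t{a}{b} \in J^{tc}$.} Granting this claim, we are done, because $J^{tc} \subseteq (T_L(\s) \cup T_L(\q))^{tc} = T_L(\s \jo \q)$.

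To prove the claim I would argue by induction on $b - a$ (the size of the support interval) together with the length of the path, or alternatively directly on the path. The base case $b - a = 1$ is immediate: $\t{a}{b}$ is a simple reflection, so it is a vertex of a path with labels in $J$ only if $\t{a}{b} \in J$ itself. For the inductive step, consider the permutation $\s_1$ obtained as the \emph{first} vertex of the path, say the first label is $\t{c}{d} \in J$; one then has a shorter path, and one wants to ``peel off'' this first step. A cleaner route: look at where $a$ and $b$ travel under the path. Since the endpoint is $\t{a}{b}$, which sends $a \mapsto b$ and $b \mapsto a$ and fixes everything else in $[a,b]$, and each label in $J \subseteq T_{ab}$ only permutes indices inside $[a,b]$, one can track the positions of the values $a$ and $b$. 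The essential combinatorial fact is that to build the transposition $\t{a}{b}$ out of transpositions in $T_{ab}$ with the length-increasing (Bruhat) condition at each step, some label must ``connect'' $a$ to the rest and some must ``connect'' $b$, and transitivity within $J$ lets us bridge these; one should be able to extract a chain $a = k_0 < k_1 < \cdots < k_m = b$ with $\t{k_r}{k_{r+1}} \in J$ for all $r$, which by transitive closure gives $\t{a}{b} \in J^{tc}$.

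The main obstacle I anticipate is making the extraction of such a chain precise from the mere existence of a length-increasing path, rather than from a specific normal form. The path can be long and wander, so a naive induction on path length may not decrease the right quantity. I expect the cleanest fix is to set up the induction on $b-a$ and, at the inductive step, consider the last vertex before $\t{a}{b}$: its label $\t{i_1}{j_1} \in T_L(\t{a}{b}) \cap J$, so (as computed in \cref{lem:T_ab}) either $i_1 = a$ or $j_1 = b$. Say $j_1 = b$; then the preceding vertex is $\t{a}{b}\t{i_1}{b} = \t{a}{i_1}\t{i_1}{b}\cdots$ — more precisely one gets that the path, truncated, reaches a reflection supported on a strictly smaller interval or a product one can analyze, to which the inductive hypothesis (or a secondary induction) applies, yielding $\t{a}{i_1} \in J^{tc}$; combined with $\t{i_1}{b} \in J$ and transitivity, $\t{a}{b} \in J^{tc}$. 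Handling the bookkeeping so that the ``truncated'' object is genuinely a Bruhat path to a single reflection on a smaller support — possibly by first invoking \cref{lem:T_ab} again on sub-paths — is the delicate point that the full proof will need to nail down carefully.
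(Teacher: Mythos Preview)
Your reduction is correct: by \cref{lem:first_inclusion_S_n} one inclusion is done, and for the other it suffices to show that any $\t{a}{b}$ reached by a $(\s,\q)$-Bruhat path lies in $(T_L(\s)\cup T_L(\q))^{tc}$, using \cref{lem:T_ab} to confine all labels to $T_{ab}$. The idea you raise of tracking where $a$ travels along the path is in fact exactly the argument that works, and it goes through directly without any induction. By contrast, the induction on $b-a$ that you settle on as the ``cleanest fix'' has a genuine gap: if the last label of the path is $\t{i_1}{b}$ with $a<i_1<b$, then the penultimate vertex is $\t{i_1}{b}\t{a}{b}$, the $3$-cycle $a\mapsto i_1\mapsto b\mapsto a$, which is \emph{not} a reflection. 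The truncated path therefore does not end at a transposition supported on a smaller interval, so neither the inductive hypothesis nor \cref{lem:T_ab} applies to it, and there is no evident way to manufacture from it a Bruhat path to $\t{a}{i_1}$ with labels in $J$.

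Here is how the tracking is made precise (this is the paper's argument). Write the labels along the path from $e$ to $\t{a}{b}$ as $\t{i_1}{j_1},\ldots,\t{i_h}{j_h}$ and extract the subsequence $\t{i'_1}{j'_1},\ldots,\t{i'_k}{j'_k}$ of those that move the current image of $a$: the first label containing $a$, then the first subsequent label containing the new image of $a$, and so on. Since all labels lie in $T_{ab}$ and $a$ is the minimum of that interval, $i'_1=a$; since the full product is $\t{a}{b}$, the final image is $b$. The key point is that at every stage the current image of $a$ is the \emph{smaller} entry of the next relevant label, so the image strictly increases. For suppose at step $p$ the image were the larger entry $j'_p$, and let $\b$ be the vertex of the path just before that edge; then $\b(a)=j'_p$, and the length-increasing condition $\l(\t{i'_p}{j'_p}\b)>\l(\b)$ forces $\b^{-1}(i'_p)<\b^{-1}(j'_p)=a$, contradicting $\b^{-1}(i'_p)\in\{a,\ldots,b\}$ (which holds because $\b$ is a product of elements of $T_{ab}$, again by \cref{lem:T_ab}). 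Hence $j'_{l}=i'_{l+1}$ for all $l$, yielding a chain $a=i'_1<i'_2<\cdots<i'_k<j'_k=b$ whose consecutive transpositions $\t{i'_l}{i'_{l+1}}$ are labels of the path and thus lie in $T_L(\s)\cup T_L(\q)$. By \cref{thm:transitive_T_L} this gives $\t{a}{b}\in T_L(\s\jo\q)$.
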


\begin{proof}
    By \cref{lem:first_inclusion_S_n}, it suffices to show that any transposition $\t{a}{b}\in V_{S_n}(\s,\q)$ belongs to $T_L(\s\jo\q)$. 
    According to \cref{thm:transitive_T_L}, this can be proven by showing that $\t{a}{b}$ lies in the transitive closure of $T_L(\s)\cup T_L(\q)$. In other words, there exists a chain 
    \begin{equation}
        a=i'_1<i'_2<\c<i'_{k+1}=b
    \end{equation} such that $\t{i'_{l-1}}{i'_l}\in T_L(\s)\cup T_L(\q)$, for every $l\in\{2,3,\ldots,k+1\}.$ 
    
    Let us consider a $(\s,\q)$-Bruhat path from $e$ to $\t{a}{b}$:
    \begin{equation}\label{initial_path}
            e\xrightarrow{\t{i_1}{j_1}}\t{i_1}{j_1}\xrightarrow{\t{i_2}{j_2}}\t{i_2}{j_2}\t{i_1}{j_1}\xrightarrow{\t{i_3}{j_3}}\c\xrightarrow{\t{i_{h}}{j_h}}\t{a}{b},
    \end{equation}
    from which 
    \begin{equation}\label{eq:path_product}
        \t{a}{b}=\t{i_{h}}{j_h}\t{i_{h-1}}{j_{h-1}}\c \t{i_1}{j_1}.
    \end{equation}
    
   We define now a permutation $\gamma$ as the product of the transpositions in \eqref{eq:path_product} that are involved in the mapping of $a$ to $b$:
    \begin{equation}\label{eq:path_product_gamma}
     \gamma=\t{i'_{k}}{j'_k}\t{i'_{k-1}}{j'_{k-1}}\c \t{i'_1}{j'_1}.
    \end{equation}
More precisely, 
\begin{itemize}
    \item the transposition $\t{i'_{1}}{j'_1}=\t{a}{j'_1}$ is the first from the right in \eqref{eq:path_product} containing $a$;
    \item for each $l\in\{2,\ldots, k-1\}$, $\t{i'_{l}}{j'_l}$ is the first transposition on the left of $\t{i'_{l-1}}{j'_{l-1}}$ in \eqref{eq:path_product} containing the entry $\t{i'_{l-1}}{j'_{l-1}}\c \t{i'_1}{j'_1}(a)$;
    \item the final transposition is $\t{i'_{k}}{j'_k}=\t{i'_{k}}{b}$.
\end{itemize}  
Clearly, $\gamma(a)=b$. \cref{exp:last_ex} shows a few specific cases.
\smallskip

 Recall that by \cref{lem:T_ab}, all ${i'_{l}}$, ${j'_l}  \in \{a,a+1,\ldots,b\}$ and observe that to show that $\t{a}{b}\in (T_L(\s)\cup T_L(\q))^{tc}$, it is sufficient to show that for any $l\in\{2,\ldots, k\}$
 \begin{equation}\label{passobase}
 {j'_{l-1}}=i'_{l}.
 \end{equation}
    Indeed, under this assumption, we obtain a chain 
    $$a=i'_1<i'_2<\ldots <i'_k<b,$$
    where the transpositions $ \t{a}{i'_{2}}, \t{i'_{2}}{i'_{3}}, \ldots, \t{i'_{k}}{b} \in T_L(\s)\cup T_L(\q)$.\\ 
    We prove \eqref{passobase} by contradiction. So suppose that $p$ is the smallest index in $\{2,\ldots,k\}$ for which this equality does not hold, that is $j'_{p-1}=j'_p$. Recall that by definition, $\t{i'_{p}}{j'_p}$ is the first transposition on the left of  $\t{i'_{p-1}}{j'_{p-1}}$ containing the entry $j'_{p-1}=\t{i'_{p-1}}{j'_{p-1}}\c \t{i'_1}{j'_1}(a)$. 
   \smallskip
    
    Now, consider the vertex $\b$ along the path \eqref{initial_path} from which starts the directed edge labeled by $\t{i'_{p}}{j'_p}$. 
    By definition of $\gamma$, we know that $\b(a)=j'_{p-1}$; moreover, since $i'_{p}<j'_p$ and $\inv(\t{i'_{p}}{j'_p}\b)>\inv(\b),$ we must have $\b\1(j'_p)>\b\1(i'_p).$
    Thus, we obtain
    \begin{equation}\label{eq:lasteq}
        a=\b\1(j'_{p-1})=\b\1(j'_p)>\b\1(i'_p).
    \end{equation}
    However, by \cref{lem:T_ab}, the vertex $\b$ is a product of transpositions from $T_{ab}$, therefore $\b\1(i'_p) \in \{a,a+1,\ldots,b\}$ and this contradicts inequality \eqref{eq:lasteq}. Hence $j'_{p-1}=i'_p$, and the proof is complete.
\end{proof}

We conclude this section with an example aimed at clarifying the content of the previous proof.

\begin{example}\label[example]{exp:last_ex}
    In \cref{fig:Bruhat_paths}, consider the following two distinct $(\s,\q)$-Bruhat paths from $e$ to $\t{2}{4}=1432$: 
    \begin{align*}
        &e\xrightarrow{\t{2}{3}}1324\xrightarrow{\t{3}{4}}1423\xrightarrow{\t{2}{3}}1432=\t{2}{4};\\
        &e\xrightarrow{\t{3}{4}}1243\xrightarrow{\t{2}{4}}1423\xrightarrow{\t{2}{3}}1432=\t{2}{4}.
    \end{align*}
    The permutations $\gamma$ associated with each path, as defined in Equation \eqref{eq:path_product_gamma}, are respectively $\t{3}{4}\t{2}{3}$ and $\t{2}{4}$.
\end{example}

\smallskip
\section{Concluding remarks}

We are working on generalizing the proof of \cref{conjecture_2} for all classical Coxeter groups. Our approach is based on a case-by-case analysis that uses the combinatorial description of any group.

As already said, Dyer's conjecture holds also for the Coxeter groups of type $H_3$ and $F_4$, as we verified using the open-source software Sage.
In type $B$, we have made some progress by adapting the combinatorial approach that was successful in type $A$.
However, this strategy does not seem to extend naturally to type $D$, where additional structural complexities arise.

 At the same time, we are exploring the possibility of finding a uniform proof, possibly using properties of the associated root system or the geometry of the associated Coxeter arrangement.

\smallskip

\bibliographystyle{amsplain}
\bibliography{typeA}

\providecommand{\bysame}{\leavevmode\hbox to3em{\hrulefill}\thinspace}
\providecommand{\MR}{\relax\ifhmode\unskip\space\fi MR }
\providecommand{\MRhref}[2]{%
  \href{http://www.ams.org/mathscinet-getitem?mr=#1}{#2}
}
\providecommand{\href}[2]{#2}
\begin{thebibliography}{1}

\bibitem{barkley2023affine}
G.~T. Barkley and D.~E. Speyer, \emph{Affine extended weak order is a lattice},
  2023.

\bibitem{barkley2022combinatorial}
\bysame, \emph{Combinatorial descriptions of biclosed sets in affine type},
  Comb. Theory \textbf{4(2) \#7} (2024).

\bibitem{Biagioli_Perrone_A}
R.~Biagioli and L.~Perrone, \emph{On a conjecture of {D}yer on the join in the
  weak order of a {C}oxeter group}, Séminaire Lotharingien de Combinatoire,
  vol. 93B Art. 13, 2025, 37th International Conference on Formal Power Series
  and Algebraic Combinatorics ({{FPSAC}} 2025), Sapporo, Japan.

\bibitem{bjorner2005combinatorics}
A.~Bj{\"o}rner and F.~Brenti, \emph{Combinatorics of {C}oxeter groups},
  Graduated Texts in Mathematics, vol. 231, Springer, 2005.

\bibitem{dyer_2019}
M.~Dyer, \emph{On the weak order of {C}oxeter groups}, C.J.M. \textbf{71}
  (2019), no.~2.

\bibitem{hohlweg2023problems}
C.~Hohlweg, \emph{Problems around inversions and descents sets in {C}oxeter
  groups}, 2023.

\bibitem{Humphreys}
J.~E. Humphreys, \emph{Reflection groups and {C}oxeter groups}, vol.~29,
  Cambridge Studies in Advanced Mathematics, no.~3, Cambridge University Press,
  Cambridge, 1990. \MR{1066460 (92h:20002)}

\bibitem{markowsky1994permutation}
G.~Markowsky, \emph{Permutation lattices revised}, Math. Soc. Sciences
  \textbf{27} (1994), no.~1.

\bibitem{sagemath}
{The Sage Developers}, \emph{{S}agemath}, 2025, {\tt https://www.sagemath.org}.

\end{thebibliography}

\end{document}